\newtheorem{theorem}{Theorem}[section]
\newtheorem{lemma}[theorem]{Lemma}
\newtheorem{proposition}[theorem]{Proposition}
\newtheorem{corollary}[theorem]{Corollary}
\theoremstyle{definition}
\newtheorem{definition}{Definition}[section]
\theoremstyle{remark}
\numberwithin{equation}{section}
\newcommand{\dist}{\textup{dist}}
\newcommand{\tcap}{\cap\kern-0.7em|\kern0.7em}
\newcommand{\Len}{\textup{Length}}
\newcommand{\inj}{\textup{inj}}
\numberwithin{equation}{section}
\newcommand*\owedge{\mathpalette\@owedge\relax}
\newcommand*\@owedge[1]{%
  \mathbin{%
    \ooalign{%
      $#1\m@th\bigcirc$\cr
      \hidewidth$#1\m@th\wedge$\hidewidth\cr
    }%
  }%
}
\numberwithin{equation}{section}
\title[Area and boundary length of annuli]{Area and boundary length of surfaces diffeomorphic to annuli}
\author{Tsz-Kiu Aaron Chow}
\address{Department of Mathematics, Columbia University, 2990 Broadway, NY10027, New York, USA}
\email{achow@math.columbia.edu}
\begin{document}

\maketitle

\begin{abstract}
In this paper, we give a proof to a statement in Perelman's paper for finite extinction time of Ricci flow \cite{PL3}. Our proof draws on different techniques from the one given in Morgan-Tian's exposition \cite{MT} and is extrinsic in nature, which relies on the co-area formula instead of the Gauss-Bonnet theorem, and is potentially generalizable to higher dimensions.
\end{abstract}

\section{Introduction}

The qualitative nature of singularities in Ricci flow of dimension 3 was significantly studied in Perelman's three renowned papers \cite{PL1, PL2, PL3}. The arguments in \cite{PL1, PL2, PL3} were detailedly addressed in the expository articles published by Cao-Zhu \cite{CZ}, Kleiner-Lott \cite{KL} and Morgan-Tian \cite{MT}. In the third paper of Perelman \cite{PL3} where he proved the extinction time for Ricci flow is finite using the technique of curve shortening flow, Perelman stated an estimate which concerns the lengths of two curves which are close in the sense that they are the boundary of an annulus with small area. More specifically speaking, given two non-intersecting closed connected curves $c_0$ and $c_1$ such that the disjoint union of them forms the boundary of an annulus $\Sigma$, we quote the statement given by Perelman in \cite{PL3} here:`` If $\varepsilon > 0$ is small then, given any $r>0$, one can find $\bar{\mu}$, depending only on $r$ and on upper bound for sectional curvatures of the ambient space, such that if the length of $c_0$ is at least $r$, each arc of $c_0$ with length $r$ has total curvature at most $\varepsilon$, and $\textup{Area}(\Sigma)\leq\bar{\mu}$, then $\Len(c_1)\geq (1 - 100\varepsilon)\Len(c_0)$."\\

Perelman's statement was addressed by Morgan and Tian in Chapter 18.6 of their exposition \cite{MT}. To our knowledge, this is the only place in the literature where this statement is addressed. In this paper, we will present a proof of Perelman's statement which draws on different techniques. Our proof is extrinsic in nature, and relies on the co-area formula instead of the Gauss-Bonnet theorem. In particular, our proof is potentially generalizable to higher dimensions. To that end, we will first give a slight variant of Perelman's statement. We will first present the full argument to the case where $\mathbb{R}^3$ is the ambient space, as the main argument is easier to follow when it is presented in the $\mathbb{R}^3$ case. After that we will present how the argument can be carried over to the Riemannian case where the ambient space is a fixed Riemannian manifold. The statement corresponding to the $\mathbb{R}^3$ case is given as follows:

\begin{theorem}{\label{main thm}}
		Let $0<\varepsilon<\frac{1}{10000}$ be small. If $\Gamma_0$ and $\Gamma_1$ are two connected closed embedded curves in $\mathbb{R}^3$ such that
	\begin{itemize}
		\item[(i)] $\Gamma_0$ is a $C^1$ curve satisfying $|T(x) - T(y)| \leq\varepsilon$ for all $x, y\in\Gamma_0$ such that $\dist_{\Gamma_0}(x,y)\leq 1$. Here, $T$ denotes the unit tangent vector field along $\Gamma_0$;
		\item[(ii)] $\Len(\Gamma_0) \geq 1$;
		\item[(iii)] $\Gamma_0\sqcup \Gamma_1 = \partial\Sigma$ for a smooth annulus $\Sigma$ with $\textup{Area}(\Sigma)\leq \varepsilon^2$
	\end{itemize}
Then we have
\[\Len(\Gamma_1)\geq (1-C\varepsilon) \Len(\Gamma_0),\]
where $C$ is a positive constant.
\end{theorem}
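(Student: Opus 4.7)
The plan is to combine the nearest-point projection onto $\Gamma_0$ with the co-area formula, reducing the length bound to a topological degree argument on a thin slab of $\Sigma$ adjacent to $\Gamma_0$.

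First, fix a small absolute constant $\rho \in (0,1)$. Hypothesis (i) implies that the tangent of $\Gamma_0$ varies by at most $\varepsilon$ on any sub-arc of length $\rho$, so the normal injectivity radius of $\Gamma_0$ is at least $\rho$ (modulo near self-intersections, addressed below). On the tube $U = \Tub_\rho(\Gamma_0)$ the nearest-point projection $\pi: U \to \Gamma_0$ is well-defined and smooth, and a direct estimate from (i), comparing tangent directions at the feet of projection, yields the Lipschitz bound $\|d\pi\|_{\textup{op}} \leq 1 + C\varepsilon$. Next, apply the co-area formula to $d(\cdot) := \dist(\cdot,\Gamma_0)$ restricted to $\Sigma$:
\[\int_0^\rho \sH^1(\Sigma \cap d^{-1}(t)) \, dt \leq \int_\Sigma |\nabla^\Sigma d| \, dA \leq \textup{Area}(\Sigma) \leq \varepsilon^2.\]
Averaging produces a regular value $t^\star \in (0, \rho)$ of $d|_\Sigma$ with $\sH^1(\Lambda) \leq \varepsilon^2/\rho$, where $\Lambda := \Sigma \cap d^{-1}(t^\star)$. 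Define the slab $\Sigma' := \Sigma \cap \{d \leq t^\star\} \subset U$, a smooth surface whose boundary decomposes as $\Gamma_0 \cup (\Gamma_1 \cap \Sigma') \cup \Lambda$.

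For the topological step, let $\Sigma^0$ be the component of $\Sigma'$ containing $\Gamma_0$. Since $\pi$ is defined on $\Sigma^0$, the winding numbers of $\pi$ over $\Gamma_0 \cong S^1$ along the components of $\partial\Sigma^0$ sum to zero. The $\Gamma_0$ portion contributes $+1$ (since $\pi|_{\Gamma_0} = \id$), so the remaining boundary loops $C_1, \ldots, C_m$ (each formed by alternating sub-arcs of $\Gamma_1 \cap \Sigma^0$ and of $\Lambda$) satisfy $\sum_{i=1}^m \deg(\pi|_{C_i}) = -1$. The Lipschitz bound gives $|\deg(\pi|_{C_i})| \cdot \Len(\Gamma_0) \leq (1+C\varepsilon) \Len(C_i)$ for every $i$; summing,
\[\Len(\Gamma_0) \leq (1+C\varepsilon)\bigl(\Len(\Gamma_1 \cap \Sigma^0) + \sH^1(\Lambda)\bigr) \leq (1+C\varepsilon) \Len(\Gamma_1) + C' \varepsilon^2.\]
Since $\Len(\Gamma_0) \geq 1$, this rearranges to $\Len(\Gamma_1) \geq (1 - C''\varepsilon) \Len(\Gamma_0)$, as desired.

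The principal obstacle is the first step: hypothesis (i) bounds the curvature of $\Gamma_0$ only at unit scale and does not preclude near self-intersections of $\Gamma_0$ at scale $\rho$ that would destroy the embeddedness of the tube $U$. The expected remedy is either to localize the whole argument to a cover of $\Gamma_0$ by arcs of length $\rho$ (where the local tubular projection is unambiguously well-defined from (i)) and then sum the local length estimates, or to argue quantitatively that a near self-intersection of $\Gamma_0$ at scale $\rho$ would force an isoperimetric bridge in $\Sigma$ of area exceeding $\varepsilon^2$, contradicting (iii). Extending the argument to a general Riemannian ambient space, as promised in the paper, additionally requires replacing the Euclidean nearest-point projection by the foot of a geodesic and controlling the resulting Jacobi-field comparison through a sectional-curvature bound.
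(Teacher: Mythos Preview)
Your approach has a gap at the very first step that neither of your proposed remedies addresses. You claim that hypothesis (i) yields a normal injectivity radius of at least $\rho$ ``modulo near self-intersections,'' and that on arcs of length $\rho$ the local tubular projection is ``unambiguously well-defined from (i).'' This is false: hypothesis (i) bounds the oscillation of the unit tangent by $\varepsilon$ at scale $1$ but gives no improvement at smaller scales, so the pointwise curvature of $\Gamma_0$ is completely uncontrolled. Concretely, a curve of the form $s\mapsto (s,g(s),0)$ with $g'(s)=\tfrac{\varepsilon}{2}\sin(s/\delta)$ satisfies (i) for every $\delta>0$, yet its curvature is of order $\varepsilon/\delta$ and its focal radius of order $\delta/\varepsilon$; taking $\delta$ small makes the normal disks intersect arbitrarily close to $\Gamma_0$, and the nearest-point projection on any tube of radius $\rho$ has Lipschitz constant at least $(1-\rho\varepsilon/(2\delta))^{-1}$, not $1+C\varepsilon$. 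This failure is \emph{local} --- it occurs already on arcs of arbitrarily short length --- so it is not a ``near self-intersection'' phenomenon and is not cured by localizing to arcs of length $\rho$. The paper flags exactly this obstruction in the introduction (``it could happen that some points on $\Gamma_0$ have arbitrarily high curvature, so that near these points the disks perpendicular to $\Gamma_0$ may have intersection out to any fixed distance'') and resolves it by first replacing $\Gamma_0$ with a smoothed curve $\tilde\gamma_0$ having uniform $C^m$ bounds (Lemma~\ref{closeness}), and only then building the transverse disks $D_s$.

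Even granting a well-behaved local foliation by disks, your global degree argument still needs the tube to be embedded, and your remedies for that also fall short. Localizing to short arcs destroys the identity $\sum_i\deg(\pi|_{C_i})=-1$: over a single arc $J$ there is no reason $\Gamma_1$ enters the tube at all, so there is no ``local length estimate'' to sum. The isoperimetric-bridge remedy is simply wrong: if $\Gamma_0$ nearly self-intersects and $\Sigma$ is a thin ribbon of width $\delta'\ll\varepsilon^2/\Len(\Gamma_0)$ about $\Gamma_0$, then $\textup{Area}(\Sigma)\leq\varepsilon^2$ regardless. The paper's substitute for your degree argument is the combination of Proposition~\ref{empty S} (an iterative co-area argument on the local foliation that rules out short arcs in $\Sigma$ joining far-apart points of $\Gamma_0$) and Proposition~\ref{good map} (which then shows the disk-following map $\phi:\Lambda\to\Gamma_1$ lands in $\Gamma_1$ and is injective). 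That iterative step, not the co-area slicing, is where the real content lies.
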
\

Moreover, the statement corresponding to the Riemannian case is given as follows:

\begin{theorem}{\label{main thm M}}
		Let $0<\varepsilon<\frac{1}{10000}$ be small. Let $(M,g)$ be a compact Riemannian manifold of dimension $3$ with $\inj(M,g)\geq 1000$. If $\Gamma_0$ and $\Gamma_1$ are two connected closed embedded curves in $M$ such that
	\begin{itemize}
		\item[(i)] $\Gamma_0$ is a $C^1$ curve satisfying $|P_{y,x}(T(y)) - T(x)| \leq\varepsilon$ for all $x, y\in\Gamma_0$ such that $\dist_{\Gamma_0}(x,y)\leq 1$. Here, $T$ denotes the unit tangent vector field along the curve $\Gamma_0$ and $P_{y,x}: T_yM \to T_xM$ is the parallel transport along $\Gamma_0$;
		\item[(ii)] $\Len(\Gamma_0)\geq 1$;
		\item[(iii)] $\Gamma_0\sqcup \Gamma_1 = \partial\Sigma$ for a smooth annulus $\Sigma\subset M$ with $\textup{Area}(\Sigma)\leq \varepsilon^2$.
	\end{itemize}
Then we have
\[\Len(\Gamma_1)\geq (1-C\varepsilon) \Len(\Gamma_0),\]
where $C$ is a positive constant depending only on the ambient space.
\end{theorem}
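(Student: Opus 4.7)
The plan is to reduce the Riemannian case to the argument of Theorem~\ref{main thm}, replacing Euclidean straight lines with ambient geodesics and absorbing the resulting curvature corrections into the constant $C$. Concretely, I would define the nearest point projection $\pi : \Tub_r(\Gamma_0) \to \Gamma_0$ using the normal exponential map $\exp^\perp$ of $\Gamma_0 \subset M$, for some $r$ comparable to $\varepsilon$. The hypothesis $\inj(M,g) \geq 1000$ guarantees that on the scales we work with (at most of order $1$), $\exp^\perp$ is a diffeomorphism onto its image, that nearest-point constructions have a unique answer, and that geodesic triangles admit Euclidean-type comparison up to errors controlled by an upper bound $K_0$ on the sectional curvatures of $(M,g)$. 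Since $M$ is compact, $K_0$ is a constant depending only on the ambient space, so these errors contribute only to the final multiplicative constant $C$.

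The key estimates that drive the $\R^3$ proof carry over with curvature-dependent errors. First, the Lipschitz constant of $\pi$ is $1 + O(\varepsilon) + O(K_0 r^2)$, where the first summand comes from the tangent-vector variation hypothesis (i) (now formulated via parallel transport) and the second from Jacobi field comparison along normal geodesics. Second, the co-area formula on a Riemannian surface is identical to the Euclidean version, so the slice argument used to find a level set of $\dist(\cdot, \Gamma_0)|_{\Sigma}$ of small length goes through verbatim. Third, the topological argument identifying an essential separating slice within the annulus is purely homotopical and therefore independent of the ambient geometry. Once these pieces are in place, the chain of inequalities relating $\Len(\Gamma_1)$, the length of the separating slice, and $\Len(\Gamma_0)$ proceeds as in Theorem~\ref{main thm} and yields $\Len(\Gamma_1) \geq (1 - C\varepsilon)\Len(\Gamma_0)$ with $C$ depending only on $K_0$.

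The main obstacle I anticipate is the careful bookkeeping of curvature corrections in the Lipschitz estimate for $\pi$. In the Euclidean case, the approximate flatness of $\Gamma_0$ on unit scales gives a clean $1 + O(\varepsilon)$ Lipschitz bound via elementary trigonometry; in the Riemannian case, one must additionally control how the normal exponential map distorts distances through a Rauch-type comparison, while simultaneously tracking how the parallel-transport-adjusted tangent variation of $\Gamma_0$ interacts with ambient curvature. A secondary technical issue is ensuring that $\Tub_r(\Gamma_0)$ is genuinely an embedded tube on the scale $r = O(\varepsilon)$: this follows from $\inj(M,g) \geq 1000$ together with hypothesis (i) applied to unit-length subarcs of $\Gamma_0$, but some care is needed where the global structure of $\Gamma_0$ might cause distant parts of the curve to come close in $M$.
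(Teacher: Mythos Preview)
Your proposal has a genuine gap at its very first step: you assume the normal exponential map $\exp^\perp$ of $\Gamma_0$ is a diffeomorphism onto a tube of radius $r\sim\varepsilon$, and hence that the nearest-point projection $\pi:\Tub_r(\Gamma_0)\to\Gamma_0$ is well-defined and Lipschitz. The hypothesis $\inj(M,g)\geq 1000$ does \emph{not} give this. The injectivity radius of $M$ controls geodesics emanating from a single point; whether $\exp^\perp$ is injective on a tube of radius $r$ depends on the \emph{focal radius} of $\Gamma_0$, which is governed by the extrinsic curvature of $\Gamma_0$ in $M$. But $\Gamma_0$ is only assumed $C^1$, and condition (i) bounds the tangent variation over unit-length arcs by $\varepsilon$ without any proportionality to arc-length. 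Thus the tangent can oscillate with amplitude $\varepsilon$ on arbitrarily short scales, so wherever $\Gamma_0$ happens to be $C^2$ the pointwise curvature $|\gamma_0''|$ can be arbitrarily large, and the normal disks of radius $r$ at nearby points can intersect for any fixed $r>0$. Your ``secondary technical issue'' paragraph misdiagnoses the obstruction as coming from distant parts of $\Gamma_0$; the actual failure is local.

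This is precisely the difficulty the paper isolates in the introduction and resolves by first replacing $\Gamma_0$ with a smoothed curve $\tilde\gamma_0$ (a mollified broken geodesic) for which uniform $C^m$ bounds, $m\geq 2$, are available; the transverse disks $D_s$ are then taken normal to $\tilde\gamma_0$ rather than to $\Gamma_0$ (Lemmas~\ref{closeness} and \ref{closeness M}, Proposition~\ref{foliated disks}, and their Riemannian analogues in Appendix~\ref{appendix a}). Once that foliation exists, the paper runs a transverse-disk co-area argument and a map $\phi:\Lambda\to\Gamma_1$ (Section~3), which is also structurally different from the ``level sets of $\dist(\cdot,\Gamma_0)$ and a separating slice'' scheme you sketch. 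Even setting aside the tube issue, your outline does not explain how a short separating level set yields a \emph{lower} bound on $\Len(\Gamma_1)$; as written, the co-area step produces a parallel slice of length $O(\varepsilon)$, which points the inequality in the wrong direction. To repair the argument you would at minimum need the smoothing step the paper carries out, after which the paper's transverse-disk machinery (or an equivalent) becomes available.
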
\

Let us point out here that Theorem \ref{main thm M} implies Perelman's statement in \cite{PL3}, which we summarize here formally:

\begin{corollary}{\label{main cor}}
			Let $0<\varepsilon<\frac{1}{10000}$ be small. Let $(M,g)$ be a compact Riemannian manifold of dimension $3$ and let $K>0$ be an upper bound of the sectional curvatures of $M$. Then given any $r>0$, the following holds:\\
			
			 If $\Gamma_0$ and $\Gamma_1$ are two connected closed embedded curves in $M$ such that
	\begin{itemize}
		\item[(i)] The length of $\Gamma_0$ is at least $r$. And 
			\[\int_I|k|ds\leq \varepsilon\]
			for any sub-segment $I$ of $\Gamma_0$ of length $r$. Here $k$ denotes the curvature of the curve $\Gamma_0$.
			\item[(ii)] $\Gamma_0\sqcup \Gamma_1 = \partial\Sigma$ for a smooth annulus $\Sigma\subset M$ with $\textup{Area}(\Sigma)\leq \frac{1}{1000K}r^2\varepsilon^2$.
	\end{itemize}
Then we have
\[\Len(\Gamma_1)\geq (1-C\varepsilon) \Len(\Gamma_0),\]
where $C$ is a positive constant depending only on the ambient space.
\end{corollary}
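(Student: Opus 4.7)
The plan is to reduce Corollary~\ref{main cor} to Theorem~\ref{main thm M} by a homothetic rescaling of the ambient metric. Setting $\tilde g = \lambda^2 g$ for a constant $\lambda>0$, the Levi-Civita connection is unchanged, so parallel transport is unchanged; lengths and the injectivity radius scale by $\lambda$; areas scale by $\lambda^2$; and the integral $\int|k|\,ds$ is scale-invariant, since $k_{\tilde g}=k_g/\lambda$ while $d\tilde s=\lambda\,ds$. The aim is to choose $\lambda$ so that $(\Gamma_0,\Gamma_1,\Sigma)$ seen inside $(M,\tilde g)$ satisfies all three hypotheses of Theorem~\ref{main thm M}.

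I would take $\lambda:=\sqrt{1000K}/r$, first enlarging $K$ to at least $1/1000$ if necessary (any larger upper bound on sectional curvature is still an upper bound, and this only tightens hypothesis~(ii) of the corollary, shrinking the class of admissible $\Sigma$). With this choice
\[\Len_{\tilde g}(\Gamma_0)\geq \lambda r=\sqrt{1000K}\geq 1\qquad\text{and}\qquad \textup{Area}_{\tilde g}(\Sigma)\leq \lambda^2\cdot\frac{r^2\varepsilon^2}{1000K}=\varepsilon^2,\]
which supplies items~(ii) and~(iii) of Theorem~\ref{main thm M}.

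For item~(i), I would convert the integral-curvature bound of hypothesis~(i) into the required parallel-transport bound. Parametrise the sub-arc from $x$ to $y$ by $g$-arc length and set $V(s):=P_{\gamma(s),x}(T(\gamma(s)))$; since parallel transport commutes with the covariant derivative along $\gamma$, $V'(s)=P_{\gamma(s),x}(\nabla_T T)$ and $|V'(s)|_g=k(s)$, whence
\[\bigl|P_{y,x}(T(y))-T(x)\bigr|_g\leq\int_{x}^{y}k(s)\,ds\leq\varepsilon\]
for any sub-arc of $g$-length at most $r$. For $x,y\in\Gamma_0$ with $\dist_{\Gamma_0,\tilde g}(x,y)\leq 1$ the sub-arc has $g$-length $\leq 1/\lambda\leq r$; and because $\tilde T=T/\lambda$ while $|\cdot|_{\tilde g}=\lambda|\cdot|_g$, the quantity $|P_{y,x}(\tilde T(y))-\tilde T(x)|_{\tilde g}$ coincides with the $g$-quantity above, verifying item~(i). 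Theorem~\ref{main thm M} then gives $\Len_{\tilde g}(\Gamma_1)\geq(1-C\varepsilon)\Len_{\tilde g}(\Gamma_0)$, and dividing both sides by $\lambda$ recovers the desired estimate in the metric $g$.

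The principal bookkeeping difficulty is the injectivity-radius hypothesis $\lambda\inj(M,g)\geq 1000$, which with the above $\lambda$ becomes $r\leq\sqrt{K/1000}\,\inj(M,g)$. For $r$ outside this range one must either modify $\lambda$ (at the cost of tightening the area estimate) or absorb the remaining bounded regime of $r$ into the ambient-space-dependent constant $C$; this is the one genuinely delicate step, the rest being a mechanical unwinding of the scaling.
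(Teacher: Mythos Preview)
Your rescaling $\tilde g=(1000K/r^2)\,g$ and the subsequent verification of the hypotheses of Theorem~\ref{main thm M} is exactly the paper's argument. For the injectivity-radius obstacle you isolate at the end, the paper's device is to note that hypothesis~(i) of the corollary is monotone in $r$: if the total-curvature bound holds on every arc of length $r$ it holds on every shorter arc, and of course $\Len(\Gamma_0)\geq r\geq r'$, so one may replace $r$ by any smaller value and thereby assume $r$ small enough that $\lambda\,\inj(M,g)\geq 1000$.

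One correction: your ``enlarge $K$ to at least $1/1000$'' step goes the wrong way. Passing to a larger $K'$ shrinks the area threshold $r^2\varepsilon^2/(1000K)$, so the conditional you establish for $K'$ has a \emph{stricter} hypothesis and is therefore \emph{weaker} than the one claimed for the original $K$; it does not recover the small-$K$ case. (The paper's reduction to small $r$ has the analogous defect with respect to hypothesis~(ii), so neither argument quite proves the corollary as literally stated; this is harmless for Perelman's original formulation, in which the area threshold $\bar\mu$ may be chosen freely as a function of $r$ and the curvature bound.)
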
\

Corollary \ref{main cor} will be proved in Appendix \ref{appendix b}. Now we illustrate the arguments we used to prove Theorem \ref{main thm} and Theorem \ref{main thm M}. The idea is to first construct a one-parameter family of disks around  $\Gamma_0$ such that when the parameter is restricted to a suitable segment, the corresponding disks form a foliation. Using that, we consider the intersection curve of each disk with the annulus $\Sigma$. Since the annulus has small area and the tangent vector field along $\Gamma_0$ cannot change too much along any sub-segment of length 1, one could then use the co-area formula to deduce that most of the intersection curves intersect with $\Gamma_1$ and do not cross with each others. From this, we can deduce the desired length inequality. The main difficulty with this argument is the construction of the desired family of disks, which is due to the fact that we do not have a point-wise control on the curvature of $\Gamma_0$. It could happen that some points on $\Gamma_0$ have arbitrarily high curvature, so that near these points the disks perpendicular to $\Gamma_0$ may have intersection out to any fixed distance. Nevertheless, we could get around this difficulty by first constructing a smooth curve that is $C^1$-close to $\Gamma_0$ and has uniform control on the $C^m$-bound, where $m\geq2$. After that a canonical one-parameter family of disks could be constructed using the smoothed curve in a way that that each disk is perpendicular to the smoothed curve.\\

In Section 2, we will construct a canonical one-parameter family of disks around $\Gamma_0$ when the ambient space is $\mathbb{R}^3$. This will be incorporated into Section 3 to prove Theorem \ref{main thm}. After that we will demonstrate in Appendix \ref{appendix a} how to adapt the arguments in Section 2 and 3 to the Riemannian case, and Theorem \ref{main thm M} will follows accordingly. Lastly, we will give a proof of Corollary \ref{main cor} from Theorem \ref{main thm M} in Appendix \ref{appendix b}.\\

Throughout the paper, a curve is understood to mean a connected curve. And the symbol $C$ is served to denote a positive constant depending only on the ambient space.

\vskip 0.5cm
\noindent\textbf{Acknowledgement:}
The author would like to express his gratitude to his advisor Professor Simon Brendle for his continuing encouragement and many inspiring ideas.

\bigskip

\section{A one-parameter family of disks in $\mathbb{R}^3$}\

By the assumptions in Theorem \ref{main thm}, we immediately have the following lemma:

\begin{lemma}{\label{tangent vector estimate}}
	Let $\gamma_0:\mathbb{R}\to\Gamma_0$ be a parametrization of $\Gamma_0$ by arc-length. Then for $|s - s_0|\leq 100$ we have
	\begin{itemize}
	\item[(i)] 	$|\gamma_0(s) - \gamma_0(s_0) - (s-s_0)\gamma_0'(s_0)| \leq C\varepsilon$;
	\item[(ii)] $|\gamma_0'(s) - \gamma_0'(s_0)|\leq C\varepsilon$.
	\end{itemize}
\end{lemma}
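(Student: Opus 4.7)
The lemma is essentially a direct consequence of the hypothesis (i) in Theorem \ref{main thm}, which already gives the desired bound for $|s-s_0|\leq 1$; the only thing to do is to promote it from scale $1$ to scale $100$, and then integrate to obtain the $C^0$-estimate (i) from the $C^1$-estimate (ii).

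For part (ii), the plan is to chain the hypothesis along at most $100$ consecutive subintervals of $[s_0,s]$ of length at most $1$. Since $\gamma_0$ is parametrized by arc length, $\dist_{\Gamma_0}(\gamma_0(t),\gamma_0(t')) \leq |t-t'|$, so the assumption $|T(x)-T(y)|\leq\varepsilon$ whenever $\dist_{\Gamma_0}(x,y)\leq 1$ translates to $|\gamma_0'(t)-\gamma_0'(t')|\leq\varepsilon$ whenever $|t-t'|\leq 1$. Partitioning $[s_0,s]$ (or $[s,s_0]$) into $n\leq 100$ pieces of length at most $1$ by points $s_0=t_0,t_1,\ldots,t_n=s$ and applying the triangle inequality telescopically yields
\[
|\gamma_0'(s)-\gamma_0'(s_0)|\;\leq\;\sum_{i=0}^{n-1}|\gamma_0'(t_{i+1})-\gamma_0'(t_i)|\;\leq\;n\varepsilon\;\leq\;100\,\varepsilon,
\]
which is (ii) with $C=100$.

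For part (i), the plan is to write the displacement as an integral of the tangent and subtract the constant tangent at $s_0$:
\[
\gamma_0(s)-\gamma_0(s_0)-(s-s_0)\gamma_0'(s_0)\;=\;\int_{s_0}^{s}\bigl(\gamma_0'(t)-\gamma_0'(s_0)\bigr)\,dt.
\]
Applying the bound from (ii) pointwise to the integrand (which is valid because each $t$ in the interval satisfies $|t-s_0|\leq 100$) and taking the norm inside the integral gives
\[
\bigl|\gamma_0(s)-\gamma_0(s_0)-(s-s_0)\gamma_0'(s_0)\bigr|\;\leq\;\int_{s_0}^{s}|\gamma_0'(t)-\gamma_0'(s_0)|\,dt\;\leq\;100\,\varepsilon\,|s-s_0|\;\leq\;10^{4}\,\varepsilon,
\]
which is (i) after absorbing the numerical factor into $C$.

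There is no real obstacle here: the only mild point is to observe that arc-length parametrization makes the intrinsic distance on $\Gamma_0$ no larger than the parameter difference, so hypothesis (i) of Theorem \ref{main thm} applies directly to pairs $\gamma_0'(t),\gamma_0'(t')$ with $|t-t'|\leq 1$. Everything else is triangle inequality and the fundamental theorem of calculus.
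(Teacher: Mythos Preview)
Your proof is correct. The paper does not actually give a proof of this lemma; it simply states that the lemma follows ``immediately'' from the assumptions of Theorem~\ref{main thm}, and your chaining argument for (ii) followed by integration for (i) is precisely the elementary verification that the paper is tacitly invoking.
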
\

\begin{corollary}{\label{Gamma length}}
	Under the assumptions of Theorem \ref{main thm}, 
	\[\Len(\Gamma_0) \geq 100.\]
\end{corollary}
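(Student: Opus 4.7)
The plan is a short argument by contradiction using Lemma \ref{tangent vector estimate}(i). I would suppose that $L := \Len(\Gamma_0) < 100$ and derive an upper bound of the form $L \leq C\varepsilon$ which, together with $\varepsilon < \frac{1}{10000}$, contradicts the hypothesis $\Len(\Gamma_0) \geq 1$ of Theorem \ref{main thm}(ii). The underlying intuition is that a closed curve whose unit tangent barely rotates must have almost-linear displacement over its whole length, yet the actual net displacement around a loop is zero; thus the length itself must be controlled by how much the tangent is allowed to drift.

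Concretely, let $\gamma_0 : \mathbb{R} \to \Gamma_0$ be the arc-length parametrization. Since $\Gamma_0$ is a closed curve, $\gamma_0$ is $L$-periodic, so $\gamma_0(L) = \gamma_0(0)$, while $|\gamma_0'(0)| = 1$. Under the contradictory assumption $L < 100$, Lemma \ref{tangent vector estimate}(i) applies with $s_0 = 0$ and $s = L$, yielding
\[
|\gamma_0(L) - \gamma_0(0) - L\,\gamma_0'(0)| \leq C\varepsilon.
\]
Substituting $\gamma_0(L) - \gamma_0(0) = 0$, the left-hand side collapses to $L$, so $L \leq C\varepsilon$. Unwinding the proof of Lemma \ref{tangent vector estimate} (where the constant arises by integrating over an interval of length at most $100$ a drift of size at most $100\varepsilon$, obtained by iterated application of hypothesis (i) of Theorem \ref{main thm}) shows that one may take $C$ on the order of $10^4$. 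Combined with $\varepsilon < \frac{1}{10000}$ this gives $L < 1$, contradicting $\Len(\Gamma_0) \geq 1$.

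I do not anticipate any serious obstacle: the argument is a two-line consequence of the previous lemma and the periodicity of the parametrization of a closed curve. The only delicate point is tracking the absolute constant $C$ in Lemma \ref{tangent vector estimate} tightly enough so that the inequality $C\varepsilon < 1$ does close under the standing smallness assumption $\varepsilon < \frac{1}{10000}$; this is fully consistent with the convention recorded in the introduction that $C$ denotes a positive constant depending only on the ambient space.
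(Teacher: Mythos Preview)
Your argument is correct and essentially identical to the paper's own proof: assume $L<100$, apply Lemma~\ref{tangent vector estimate}(i) at $s_0=0$, $s=L$, and use periodicity to get $L\leq C\varepsilon$, contradicting $L\geq 1$. The paper states this in one line without explicitly tracking the constant, but the content is the same.
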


\begin{proof}
	Let's denote by $L$ the length of $\Gamma_0$. Let $\gamma_0:\mathbb{R}\to\Gamma_0$ be a parametrization of $\Gamma_0$ by arc-length, then $\gamma_0$ is periodic with period $L$. In particular, we have
	\[ \gamma_0(L) = \gamma_0(0).\]
	Now if $L < 100$, then Lemma \ref{tangent vector estimate} implies $1\leq L\leq C\varepsilon$, which is a contradiction.
\end{proof}\

\begin{corollary}{\label{large length}}
	Suppose that $\hat{\Gamma}$ is a sub-segment in $\Gamma_0$ with end-points $\hat{x}, \hat{y}$.\\ If $|\hat{x} - \hat{y}|\leq 10\varepsilon$ and $\Len(\hat{\Gamma})\geq 1$, then 
	\[\Len(\hat{\Gamma}) \geq 50.\]  	
\end{corollary}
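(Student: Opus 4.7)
The plan is a short proof by contradiction that directly exploits Lemma \ref{tangent vector estimate}. Assume toward contradiction that $\Len(\hat{\Gamma}) < 50$, and parametrize $\Gamma_0$ by arc-length by $\gamma_0$ so that $\hat{x} = \gamma_0(s_0)$ and $\hat{y} = \gamma_0(s_0 + L)$, where $L := \Len(\hat{\Gamma})$. Since $L < 50 < 100$, Lemma \ref{tangent vector estimate}(i) applies with parameters $s_0$ and $s_0 + L$ and gives
\[\bigl|\hat{y} - \hat{x} - L\,\gamma_0'(s_0)\bigr| \leq C\varepsilon.\]

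Using $|\gamma_0'(s_0)| = 1$ together with the reverse triangle inequality and the hypothesis $|\hat{x} - \hat{y}| \leq 10\varepsilon$, this gives
\[L = \bigl|L\,\gamma_0'(s_0)\bigr| \leq |\hat{y} - \hat{x}| + C\varepsilon \leq (10 + C)\varepsilon.\]
Because $\varepsilon < 10^{-4}$, the right-hand side is strictly less than $1$ for any reasonable numerical value of $C$, contradicting the hypothesis $\Len(\hat{\Gamma}) \geq 1$. Hence $\Len(\hat{\Gamma}) \geq 50$.

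The only real obstacle is bookkeeping of the constant $C$: one must check that the constant produced by Lemma \ref{tangent vector estimate} is compatible with the smallness $\varepsilon < 10^{-4}$ so that $(10 + C)\varepsilon < 1$. Tracing through the proof of that lemma (breaking a segment of length at most $100$ into unit sub-segments on each of which the tangent varies by at most $\varepsilon$, then integrating), one obtains $C$ on the order of $100^2$, and indeed $(10 + C)\varepsilon \ll 1$. The heuristic content of the corollary is transparent: if the endpoints of a sub-arc of length less than $100$ are extremely close, the near-parallel tangents force the sub-arc to be essentially straight and hence to have length on the order of $\varepsilon$; the only way to reconcile this with the lower bound $\Len(\hat{\Gamma}) \geq 1$ is for the sub-arc to exceed the regime $L \leq 100$ in which the linearization is valid, and in particular to exceed $50$.
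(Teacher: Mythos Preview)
Your proof is correct and follows essentially the same approach as the paper: assume $L<50$, invoke Lemma~\ref{tangent vector estimate}(i), and derive a contradiction with $L\geq 1$. The only cosmetic difference is that you apply the reverse triangle inequality to obtain $L\leq (10+C)\varepsilon$, whereas the paper expands $|\hat y-\hat x-L\gamma_0'(0)|^2$ and bounds it below by $L^2-2L|\hat y-\hat x|\geq 1-1000\varepsilon$; both arguments use the same ingredients.
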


\begin{proof}
	Let's assume in contrary that $\Len(\hat{\Gamma}) < 50$.  Let $\gamma_0:\mathbb{R}\to\Gamma_0$ be a parametrization of $\Gamma_0$ by arc-length and denote by $L$ the length of $\hat{\Gamma}$. Without loss of generality we may assume $\gamma_0(0) = \hat{x}$ and $\gamma_0(L) = \hat{y}$. Then Lemma \ref{tangent vector estimate} implies that 
	\begin{align*}
	C\varepsilon &\geq |\gamma_0(L) - \gamma_0(0) - L\gamma_0'(0)|^2\\
	&\geq L^2 - 2\langle \gamma_0(L) - \gamma_0(0),\ L\gamma_0'(0)\rangle\\
	&\geq L^2 - 2L |\gamma_0(L) - \gamma_0(0)|\\
	&\geq 1 -1000\varepsilon, 
	\end{align*}
	which is a contradiction.
\end{proof}

\bigskip

Next, we would like to construct a one-parameter family of disks which form a smooth foliation when they are restricted to short sub-segments of $\Gamma_0$. The idea is to replace $\Gamma_0$ by a piecewise linear path, and then smooth out the corners using a cut-off function. We first fix a smooth cut-off function $\chi:\mathbb{R}\to\mathbb{R}$ such that:
 \begin{itemize}
 	\item $\chi = 0$ on $(-\infty, -\frac{1}{4}]$;
 	\item $\chi = 1$ on $[\frac{1}{4}, \infty)$.
 \end{itemize}
 We also fix a parametrization $\gamma_0 : \mathbb{R}\to\Gamma_0$ by arc-length,  so that $\gamma_0$ is periodic with period $L = \Len(\Gamma_0)$. By Lemma \ref{tangent vector estimate}, we have $L > 100$. Now we fix a number $k\in [L, 2L]$ and divide $[0, L]$ into sub-intervals of length $\frac{L}{k}\in [\frac{1}{2}, 1]$.
Associated with the curve $\gamma_0$, we define a curve $\tilde{\gamma}_0$ by

\begin{align}{\label{smoothing}}
	\tilde{\gamma}_0(s) &:= \gamma_0\left(\frac{jL}{k}\right) + \left(\frac{ks}{L} - j\right)\chi\left(\frac{ks}{L} - j\right)\left[ \gamma_0\left(\frac{(j+1)L}{k}\right) - \gamma_0\left(\frac{jL}{k}\right)\right]\notag\\
	&\quad\quad\quad + \left( j - \frac{ks}{L} \right)\left( 1 -\chi\left(\frac{ks}{L} - j\right)\right)\left[ \gamma_0\left(\frac{(j-1)L}{k}\right) - \gamma_0\left(\frac{jL}{k}\right)\right],
\end{align}\\
for $s\in \left[\frac{(j - 1/2)L}{k}, \frac{(j + 1/2)L}{k}\right]$, $j = \frac{1}{2}, \frac{3}{2}, .., k -\frac{1}{2}$. 
Then $\tilde{\gamma}_0$ is again periodic with period $L$. 

\begin{lemma}{\label{smoothness}}
	The curve $\tilde{\gamma}_0(s)$ is smooth.
\end{lemma}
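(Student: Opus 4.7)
The plan is to check smoothness separately in the interior of each piece and at the junction points, exploiting the fact that $\chi$ has been chosen to be constant on $(-\infty,-1/4]$ and on $[1/4,\infty)$.

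First, fix a half-integer $j\in\{1/2,3/2,\ldots,k-1/2\}$ and consider the restriction of $\tilde\gamma_0$ to the interior of the interval $\bigl[\tfrac{(j-1/2)L}{k},\tfrac{(j+1/2)L}{k}\bigr]$. There the formula (\ref{smoothing}) is just a sum of products of polynomials in $s$ with compositions of the smooth cut-off $\chi$, so $\tilde\gamma_0$ is $C^\infty$ on each open sub-interval.

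The real content is smoothness at a junction point $s_\ast=\tfrac{mL}{k}$ for an integer $m$, which is the common endpoint of the sub-intervals labelled $j=m-1/2$ (to the left) and $j=m+1/2$ (to the right). Setting $u=ks/L$, in the left sub-interval near $s_\ast$ one has $u-(m-1/2)\in[1/4,1/2]$ so $\chi(u-(m-1/2))=1$; the third term in (\ref{smoothing}) therefore vanishes, and the formula collapses to
\[
\tilde\gamma_0(s)=\gamma_0\!\left(\tfrac{(m-1/2)L}{k}\right)+\bigl(u-(m-\tfrac12)\bigr)\left[\gamma_0\!\left(\tfrac{(m+1/2)L}{k}\right)-\gamma_0\!\left(\tfrac{(m-1/2)L}{k}\right)\right].
\]
Symmetrically, in the right sub-interval near $s_\ast$ one has $u-(m+1/2)\in[-1/2,-1/4]$, so $\chi(u-(m+1/2))=0$, the first correction term vanishes, and the formula collapses to
\[
\tilde\gamma_0(s)=\gamma_0\!\left(\tfrac{(m+1/2)L}{k}\right)+\bigl(u-(m+\tfrac12)\bigr)\left[\gamma_0\!\left(\tfrac{(m+1/2)L}{k}\right)-\gamma_0\!\left(\tfrac{(m-1/2)L}{k}\right)\right].
\]
A direct check (writing both expressions as an affine function of $u$) shows that these two formulas are literally the \emph{same} affine function of $s$ on a two-sided neighbourhood of $s_\ast$, namely the straight-line parametrization joining $\gamma_0((m-1/2)L/k)$ and $\gamma_0((m+1/2)L/k)$. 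Consequently $\tilde\gamma_0$ extends smoothly (indeed real-analytically) across $s_\ast$. Since this holds at every junction, and periodicity with period $L$ handles the wrap-around at $s=0$ and $s=L$ in exactly the same way, we conclude that $\tilde\gamma_0$ is smooth on all of $\mathbb{R}$.

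There is no serious obstacle here; the only point to verify carefully is that the two one-sided affine expressions agree as functions of $s$ (not merely match in value at $s_\ast$), which is what the geometric picture of a linear segment between consecutive vertices already suggests.
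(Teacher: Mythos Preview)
Your proof is correct and follows essentially the same approach as the paper: both exploit that $\chi$ is constant near $\pm\tfrac12$ so that the piecewise formula reduces to the \emph{same} affine parametrization of the segment between consecutive vertices on either side of a junction. The only cosmetic difference is organizational---the paper fixes a single $j$ and computes the reductions at both ends of its interval, whereas you fix a junction $s_\ast=mL/k$ and compare the two adjacent pieces there; the content is identical.
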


\begin{proof}
Observe that for $s\in \left[ \frac{(j - 1/2)L}{k}, \frac{(j - 1/4)L}{k}\right)$, we have

\begin{align*}
	\tilde{\gamma}_0(s) &= 	\gamma_0\left(\frac{jL}{k}\right) + \left(j -\frac{ks}{L} \right)\left[ \gamma_0\left(\frac{(j-1)L}{k}\right) - \gamma_0\left(\frac{jL}{k}\right)\right]\\
	&= \gamma_0\left(\frac{(j-1)L}{k}\right) +  \left(\frac{ks}{L} - j + 1 \right)\left[ \gamma_0\left(\frac{jL}{k}\right) - \gamma_0\left(\frac{(j-1)L}{k}\right)\right].
\end{align*}
Similarly, for $s\in \left[ \frac{(j + 1/4)L}{k}, \frac{(j + 1/2)L}{k}\right)$, we have

\begin{align*}
	\tilde{\gamma}_0(s) &= \gamma_0\left(\frac{jL}{k}\right) +  \left(\frac{ks}{L} - j \right)\left[ \gamma_0\left(\frac{(j+1)L}{k}\right) - \gamma_0\left(\frac{jL}{k}\right)\right].
\end{align*}
From this, we can see that $\tilde{\gamma}_0(s)$ is smooth near $s = \frac{(j - 1/2)L}{k}$ and $s = \frac{(j + 1/2)L}{k}$.

\end{proof}

\begin{lemma}{\label{closeness}}
	For each $s$, we have
	\begin{itemize}
	\item[(i)]	$| \tilde{\gamma}_0(s)-\gamma_0(s)|\leq C\varepsilon$;
	\item[(ii)] $| \tilde{\gamma}_0'(s)-\gamma_0'(s)|\leq C\varepsilon$;
	\item[(iii)] $\Big|\frac{d^m}{ds^m}\tilde{\gamma}_0(s)\Big|\leq C(m)\varepsilon$ for each $m\geq 2$.
	\end{itemize}
\end{lemma}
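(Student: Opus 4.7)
The plan is to fix a transition interval $s \in [\frac{(j-1/2)L}{k}, \frac{(j+1/2)L}{k}]$ and work with the abbreviations $\xi = \frac{ks}{L} - j \in [-\tfrac12, \tfrac12]$ and $A_\pm = \gamma_0(\frac{(j\pm 1)L}{k}) - \gamma_0(\frac{jL}{k})$, so that the definition (\ref{smoothing}) becomes
\[\tilde{\gamma}_0(s) = \gamma_0\!\left(\tfrac{jL}{k}\right) + \xi\chi(\xi)\, A_+ \;-\; \xi(1-\chi(\xi))\, A_-.\]
Since $L/k \in [\tfrac12, 1]$, Lemma \ref{tangent vector estimate}(i) applied at $s_0 = jL/k$ yields
\[A_\pm = \pm\tfrac{L}{k}\,\gamma_0'\!\left(\tfrac{jL}{k}\right) + e_\pm, \qquad |e_\pm| \leq C\varepsilon.\]
The key structural consequence is the ``second difference'' cancellation $|A_+ + A_-| \leq C\varepsilon$, which is what powers all three estimates.

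For part (i), I would substitute the expansions of $A_\pm$ into the formula above. The leading pieces combine via $\chi(\xi) + (1-\chi(\xi)) = 1$ into $(s - jL/k)\,\gamma_0'(jL/k)$, while the contributions from $e_\pm$ are each $O(\varepsilon)$ because $|\xi|$, $|\chi|$, $|1-\chi|$ are bounded. A second application of Lemma \ref{tangent vector estimate}(i) then compares $\gamma_0(jL/k) + (s - jL/k)\gamma_0'(jL/k)$ with $\gamma_0(s)$ up to an error of $C\varepsilon$, giving (i).

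For part (ii), rewriting $\chi(\xi) A_+ - (1-\chi(\xi)) A_- = -A_- + \chi(\xi)(A_+ + A_-)$ and differentiating in $s$ gives
\[\tilde{\gamma}_0'(s) = \tfrac{k}{L}\Big[-A_- + \bigl(\chi(\xi) + \xi\chi'(\xi)\bigr)(A_+ + A_-)\Big].\]
The first piece equals $(k/L)(-A_-) = \gamma_0'(jL/k) + O(\varepsilon)$ by the expansion of $A_-$, and the second is $O(\varepsilon)$ because $|A_+ + A_-| \leq C\varepsilon$ and $\chi, \chi'$ are uniformly bounded. Invoking Lemma \ref{tangent vector estimate}(ii) to pass from $\gamma_0'(jL/k)$ to $\gamma_0'(s)$ completes (ii).

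For part (iii), the decisive observation is that differentiating the previous display once more annihilates the constant vector $-A_-$, so every surviving term carries the small factor $A_+ + A_-$. Writing $\Phi(\xi) := \chi(\xi) + \xi\chi'(\xi)$, an easy induction produces
\[\tilde{\gamma}_0^{(m)}(s) = \Bigl(\tfrac{k}{L}\Bigr)^m \Phi^{(m-1)}(\xi)\,(A_+ + A_-), \qquad m \geq 2,\]
and since $k/L \leq 2$, $|A_+ + A_-| \leq C\varepsilon$, and $\|\Phi^{(m-1)}\|_\infty$ depends only on $m$ and the fixed cutoff $\chi$, the bound $|\tilde{\gamma}_0^{(m)}(s)| \leq C(m)\varepsilon$ follows. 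On the two straight sub-intervals of $s$ where $\chi \equiv 0$ or $\chi \equiv 1$ (so $\chi' \equiv 0$), $\tilde{\gamma}_0$ is affine in $s$ and derivatives of order $\geq 2$ vanish, consistently with the formula above. I do not expect any genuine obstacle: the argument is an algebraic reshuffling whose only real content is the second-difference cancellation $|A_+ + A_-| \leq C\varepsilon$, already packaged by Lemma \ref{tangent vector estimate}(i).
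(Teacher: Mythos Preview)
Your proposal is correct and follows essentially the same approach as the paper: both arguments fix $j$, use Lemma~\ref{tangent vector estimate} to approximate the secant vectors $A_\pm$ by $\pm\tfrac{L}{k}\gamma_0'(\tfrac{jL}{k})$ up to $O(\varepsilon)$, and identify the second difference $A_+ + A_-$ (which you write as such and the paper writes as $\gamma_0(\tfrac{(j+1)L}{k}) - 2\gamma_0(\tfrac{jL}{k}) + \gamma_0(\tfrac{(j-1)L}{k})$) as the sole contributor to all derivatives of order $\geq 2$. Your notation is a bit tidier---in particular your rewrite $\chi A_+ - (1-\chi)A_- = -A_- + \chi(A_+ + A_-)$ and the resulting closed form $\tilde{\gamma}_0^{(m)} = (k/L)^m \Phi^{(m-1)}(\xi)(A_+ + A_-)$ are slightly cleaner than the paper's explicit expansion---but the substance is identical.
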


\begin{proof}
We first fix $j$. Clearly by Lemma \ref{tangent vector estimate}, we have
\begin{equation}{\label{C^0 estimate}}
	\left| \gamma_0(s) - \gamma_0\left(\frac{jL}{k}\right) - \left(s - \frac{jL}{k}\right)\gamma_0'\left(\frac{jL}{k}\right) \right| \leq C\varepsilon 	
\end{equation}
for $s\in \left[\frac{(j-1)L}{k}, \frac{(j+1)L}{k} \right]$. 
In particular, this implies that

\begin{equation}{\label{closeness estimate}}
\begin{cases}
&\left| \gamma_0\left(\frac{(j+1)L}{k}\right) - \gamma_0\left(\frac{jL}{k}\right) -  \frac{L}{k}\gamma_0'\left(\frac{jL}{k}\right) \right| \leq C\varepsilon,\quad\text{and}\\
&\left| \gamma_0\left(\frac{(j-1)L}{k}\right) - \gamma_0\left(\frac{jL}{k}\right) +  \frac{L}{k}\gamma_0'\left(\frac{jL}{k}\right) \right| \leq C\varepsilon
\end{cases}
\end{equation}\\
for  $s\in \left[\frac{(j-1)L}{k}, \frac{(j+1)L}{k} \right]$. Then for assertion (i), we observe that

\begin{align*}
	&\tilde{\gamma}_0(s) - \gamma_0\left(\frac{jL}{k}\right) - \left(s - \frac{jL}{k}\right)\gamma_0'\left(\frac{jL}{k}\right) \\
	&=\left(\frac{ks}{L} - j\right)\chi\left(\frac{ks}{L} - j\right)\left[ \gamma_0\left(\frac{(j+1)L}{k}\right) - \gamma_0\left(\frac{jL}{k}\right)\right]\notag\\
	&\quad\quad\quad + \left( j - \frac{ks}{L} \right)\left( 1 -\chi\left(\frac{ks}{L} - j\right)\right)\left[ \gamma_0\left(\frac{(j-1)L}{k}\right) - \gamma_0\left(\frac{jL}{k}\right)\right]\\
	&\quad\quad\quad - \left(\frac{ks}{L} - j\right)\cdot\frac{L}{k}\gamma_0'\left(\frac{jL}{k}\right)\\
	&=\left(\frac{ks}{L} - j\right)\chi\left(\frac{ks}{L} - j\right)\left[ \gamma_0\left(\frac{(j+1)L}{k}\right) - \gamma_0\left(\frac{jL}{k}\right) - \frac{L}{k}\gamma_0'\left(\frac{jL}{k}\right)\right]\\
	&\quad\quad + \left( j - \frac{ks}{L} \right)\left( 1 -\chi\left(\frac{ks}{L} - j\right)\right)\left[ \gamma_0\left(\frac{(j-1)L}{k}\right) - \gamma_0\left(\frac{jL}{k}\right) + \frac{L}{k}\gamma_0'\left(\frac{jL}{k}\right)\right]
\end{align*}\\
for  $s\in \left[\frac{(j-1/2)L}{k}, \frac{(j+1/2)L}{k} \right]$. Hence, (\ref{closeness estimate}) implies that
\[ 	\left| \tilde{\gamma}_0(s) - \gamma_0\left(\frac{jL}{k}\right) - \left(s - \frac{jL}{k}\right)\gamma_0'\left(\frac{jL}{k}\right) \right| \leq C\varepsilon 	\] 
for  $s\in \left[\frac{(j-1/2)L}{k}, \frac{(j+1/2)L}{k} \right]$. Subsequently assertion (i) follows from combining the above estimate with (\ref{C^0 estimate}). 

\bigskip

Next, we derive that 

\begin{align*}
		&\tilde{\gamma}_0'(s)- \gamma_0'\left(\frac{jL}{k}\right) \\
		&= \left\{\chi\left(\frac{ks}{L} - j\right) + \left(\frac{ks}{L} - j\right)\chi'\left(\frac{ks}{L} - j\right)\right\}\cdot \left\{\frac{k}{L}\left[ \gamma_0\left(\frac{(j+1)L}{k}\right) - \gamma_0\left(\frac{jL}{k}\right)\right] - \gamma_0'\left(\frac{jL}{k}\right)\right\}\\
		&\quad +\left\{\chi\left(\frac{ks}{L} - j\right) + \left(\frac{ks}{L} - j\right)\chi'\left(\frac{ks}{L} - j\right) - 1\right\}\cdot \left\{\frac{k}{L}\left[ \gamma_0\left(\frac{(j-1)L}{k}\right) - \gamma_0\left(\frac{jL}{k}\right)\right] + \gamma_0'\left(\frac{jL}{k}\right)\right\}
\end{align*}\\
for $s\in \left[\frac{(j - 1/2)L}{k}, \frac{(j + 1/2)L}{k}\right]$. Hence by (\ref{closeness estimate}) we obtain
\[ \left|\tilde{\gamma}_0'(s)-\gamma_0'\left(\frac{jL}{k}\right)\right| \leq C\varepsilon.\]
This implies assertion (ii).
\bigskip

For assertion (iii), observe that (\ref{closeness estimate}) gives 

\begin{equation}{\label{C^m estimate}}
	 \left|\left(\gamma_0\left(\frac{(j+1)L}{k}\right) - \gamma_0\left(\frac{jL}{k}\right)\right) -  \left(\gamma_0\left(\frac{jL}{k}\right) - \gamma_0\left(\frac{(j-1)L}{k}\right)\right)\right| \leq C\varepsilon
\end{equation}\\
for $s\in \left[\frac{(j - 1)L}{k}, \frac{(j + 1)L}{k}\right]$.
 On the other hand, we compute that

\begin{align*}
	\tilde{\gamma}_0''(s) &= \frac{k^2}{L^2}	\ \chi'\left(\frac{ks}{L} - j\right) \left(\gamma_0\left(\frac{(j+1)L}{k}\right) - 2\gamma_0\left(\frac{jL}{k}\right) + \gamma_0\left(\frac{(j-1)L}{k}\right)\right) \\
	&\quad + \frac{k^2}{L^2}\left( \frac{ks}{L} - j \right)\ \chi''\left(\frac{ks}{L} - j\right) \left(\gamma_0\left(\frac{(j+1)L}{k}\right) - 2\gamma_0\left(\frac{jL}{k}\right) + \gamma_0\left(\frac{(j-1)L}{k}\right)\right)\\
	&= \left\{ \frac{k^2}{L^2}	\ \chi'\left(\frac{ks}{L} - j\right) + \frac{k^2}{L^2}\left( \frac{ks}{L} - j \right)\ \chi''\left(\frac{ks}{L} - j\right)\right\}\\
	&\quad\quad \times \left\{\gamma_0\left(\frac{(j+1)L}{k}\right) - 2\gamma_0\left(\frac{jL}{k}\right) + \gamma_0\left(\frac{(j-1)L}{k}\right)\right\}
\end{align*}\\
for $s\in \left[\frac{(j - 1/2)L}{k}, \frac{(j + 1/2)L}{k}\right]$. This implies that

\[ \tilde{\gamma}_0^{(m)}(s) = P_m\left(\frac{ks}{L}-j\right) \left\{\gamma_0\left(\frac{(j+1)L}{k}\right) - 2\gamma_0\left(\frac{jL}{k}\right) + \gamma_0\left(\frac{(j-1)L}{k}\right)\right\}\]\\
for $s\in \left[\frac{(j - 1/2)L}{k}, \frac{(j + 1/2)L}{k}\right]$, where $P_m(r) = P_m(r,\chi(r), \chi'(r),\hdots, \chi^{(m)}(r))$ is a polynomial expression in $r, \chi(r),  \chi'(r),\hdots, \chi^{(m)}(r)$. Then by the estimate (\ref{C^m estimate}), we obtain
\[ |\tilde{\gamma}_0^{(m)}(s)| \leq C(m)\varepsilon.\]
\end{proof}

\bigskip

Thus we have a uniform control on higher-order derivative of $\tilde{\gamma}_0$. Next we will use this smoothed curve to construct a one-parameter family of disks. 

\begin{definition} For each $s$, define a disk $D_s$ by
\[ D_s := \{\tilde{\gamma}_0(s) + v:\ \langle \tilde{\gamma}_0'(s), v\rangle = 0,\ |v|< 1\}.\]
\end{definition}

\bigskip

\begin{proposition}{\label{foliated disks}}
If $J\subset\mathbb{R}$ is an interval of length less than 20, then	
\begin{itemize}
\item[(i)] the one-parameter family of disks $\{D_s\}_{s\in J}$ form a smooth foliation around the sub-segment $\tilde{\gamma}_0(J)$;
\item[(ii)] the canonical map of foliation $v_J: \{D_s\}_{s\in J}\to J$ satisfies
	\[ 1-C\varepsilon < |Dv_J| < 1+C\varepsilon.\]	
\end{itemize}
\end{proposition}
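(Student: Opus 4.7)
The plan is to introduce the map $F: J\times B^2 \to \mathbb{R}^3$ defined by
\[
F(s, a, b) = \tilde{\gamma}_0(s) + a\, e_1(s) + b\, e_2(s),
\]
where $\{e_1(s), e_2(s)\}$ is a smooth orthonormal frame for the plane normal to $\tilde{\gamma}_0'(s)$, obtained for instance by parallel transport along $\tilde{\gamma}_0$ starting from any initial orthonormal pair perpendicular to $\tilde{\gamma}_0'(s_0)$. The image of $F$ is exactly $\bigcup_{s\in J} D_s$, and the two assertions of the proposition both reduce to showing that $F$ is a diffeomorphism onto its image whose inverse, written as $(v_J, w_J)$, has $|Dv_J|$ close to $1$. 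Throughout, the crucial input is Lemma \ref{closeness}, which gives $|\tilde{\gamma}_0'| = 1 + O(\varepsilon)$, $|\tilde{\gamma}_0''| \leq C\varepsilon$, and consequently $|e_i'(s)| \leq C\varepsilon$ for the frame above.

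Step one is the Jacobian computation. The columns of $DF$ are $\partial_s F = \tilde{\gamma}_0'(s) + a\, e_1'(s) + b\, e_2'(s)$, $\partial_a F = e_1(s)$, and $\partial_b F = e_2(s)$. Since $|a|, |b| < 1$ and $|e_i'(s)| \leq C\varepsilon$, the triple product $\det(\partial_s F, e_1, e_2)$ differs from $\det(\tilde{\gamma}_0', e_1, e_2) = \pm |\tilde{\gamma}_0'(s)|$ by at most $C\varepsilon$, so the Jacobian has absolute value $1 + O(\varepsilon) > 0$. Hence $F$ is a local diffeomorphism.

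Step two, which I expect to be the main obstacle, is proving that $F$ is injective on $J\times B^2$. Since $|J|$ is allowed to be as large as $20$ while the disks have radius $1$, nothing a priori rules out two widely separated normal disks from crossing. Here I would suppose $F(s_1, a_1, b_1) = F(s_2, a_2, b_2)$ with $s_1 < s_2$, rearrange to
\[
\tilde{\gamma}_0(s_2) - \tilde{\gamma}_0(s_1) = a_1 e_1(s_1) + b_1 e_2(s_1) - a_2 e_1(s_2) - b_2 e_2(s_2),
\]
and take the inner product with $\tilde{\gamma}_0'(s_1)$. Using $|\tilde{\gamma}_0''| \leq C\varepsilon$, the left-hand side evaluates to $(1 + O(\varepsilon))(s_2 - s_1)$; on the right, the terms involving $e_i(s_1)$ vanish since $e_i(s_1) \perp \tilde{\gamma}_0'(s_1)$, while the remaining terms equal $-(a_2 e_1(s_2) + b_2 e_2(s_2)) \cdot (\tilde{\gamma}_0'(s_1) - \tilde{\gamma}_0'(s_2))$, which is bounded by $C\varepsilon|s_2 - s_1|$ because $e_i(s_2) \perp \tilde{\gamma}_0'(s_2)$ and $|\tilde{\gamma}_0'(s_1) - \tilde{\gamma}_0'(s_2)| \leq C\varepsilon|s_2 - s_1|$. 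Choosing $\varepsilon$ small enough, this forces $s_1 = s_2$, whence $a_1 = a_2$ and $b_1 = b_2$ follow immediately.

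Step three extracts the bound on $|Dv_J|$. The relation $v_J \circ F(s, a, b) = s$ shows that $\nabla v_J$ is annihilated by $e_1(s)$ and $e_2(s)$, hence parallel to the unit tangent $\tilde{T}(s) := \tilde{\gamma}_0'(s)/|\tilde{\gamma}_0'(s)|$. Writing $\nabla v_J = \mu \tilde{T}(s)$, the chain rule identity $\nabla v_J \cdot \partial_s F = 1$ together with $\tilde{T}(s) \cdot e_i'(s) = -\tilde{T}'(s) \cdot e_i(s) = O(\varepsilon)$ (derived by differentiating $\tilde{T} \cdot e_i = 0$) yields $\mu \cdot (1 + O(\varepsilon)) = 1$, so $|Dv_J| = |\mu| = 1 + O(\varepsilon)$, which gives the claimed two-sided bound.
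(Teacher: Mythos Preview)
Your argument is correct and follows the same overall strategy as the paper---parametrize the tube $\bigcup_{s\in J} D_s$ by a map from $J\times B^2$ and show it is a diffeomorphism with controlled inverse---but the execution differs in two respects worth noting.

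First, the paper does not build a moving orthonormal normal frame. Instead it fixes the standard basis $\{e_1,e_2,e_3\}$ of $\mathbb{R}^3$, normalizes so that $\tilde{\gamma}_0(0)=0$ and $\tilde{\gamma}_0'(0)=e_3$, and parametrizes the normal plane at $\tilde{\gamma}_0(s)$ by the (non-orthonormal) vectors $|\tilde{\gamma}_0'(s)|^2 e_i - \langle\tilde{\gamma}_0'(s),e_i\rangle\tilde{\gamma}_0'(s)$ for $i=1,2$. This sidesteps the need to construct a Bishop-type frame. Your phrase ``parallel transport along $\tilde{\gamma}_0$'' is slightly imprecise in $\mathbb{R}^3$ (ordinary parallel transport is constant and would not stay normal); what you actually use, and what makes $|e_i'(s)|\leq C\varepsilon$ hold while preserving $e_i(s)\perp\tilde{\gamma}_0'(s)$, is parallel transport in the \emph{normal bundle} (equivalently, a rotation-minimizing frame). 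The construction is routine, but you should name it correctly.

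Second, and more substantively, the paper obtains both local invertibility and global injectivity in one stroke by observing that its map $\Psi$ is $C^m$-close (with error $O(\varepsilon)$) to the linear map $(s,t_1,t_2)\mapsto t_1e_1+t_2e_2+se_3$ on the whole region $|s|<100$, $t_1^2+t_2^2<20$, and then invoking a quantitative form of the Inverse Function Theorem. You instead separate the two issues: a Jacobian computation for local invertibility, and a direct inner-product estimate for global injectivity. Your Step~2 is essentially the content hidden inside the paper's appeal to the quantitative IFT, made explicit. This makes your proof more self-contained and uses only the second-derivative bound $|\tilde{\gamma}_0''|\leq C\varepsilon$, whereas the paper's $C^m$-closeness statement formally draws on all of Lemma~\ref{closeness}(iii). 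On the other hand, the paper's packaging transfers more directly to the Riemannian setting treated in Appendix~\ref{appendix a}.
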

\begin{proof}
	
	Let $\{e_1, e_2, e_3\}$ denotes the standard basis of $\mathbb{R}^3$. Without loss of generality, we may assume $0\in J$, $\tilde{\gamma}_0(0) = 0$ and $\tilde{\gamma}_0'(0) = e_3$. By Lemma \ref{closeness}, for $|s|<100$, the map $s\mapsto \tilde{\gamma}_0(s)$ is $C^m$-close to the map $s\mapsto s\cdot e_3$, with error bounded by $O(\varepsilon)$.
	
	For $|s|<100$, $t_1^2 + t_2^2 < 20$, let's consider the map		
	\begin{equation}
		 \Psi (s, t_1, t_2) := \tilde{\gamma}_0(s) + \sum_{i=1}^2 t_i\cdot \left(|\tilde{\gamma}_0'(s)|^2 e_i - \langle\tilde{\gamma}_0'(s), e_i\rangle\tilde{\gamma}_0'(s)\right).
	\end{equation}	
	By Lemma \ref{tangent vector estimate} and Lemma \ref{closeness}, in the region $|s|<100$, $t_1^2 + t_2^2 < 20$, the map $(s, t_1, t_2)\mapsto\Psi(s, t_1, t_2)$ is $C^m$-close to the map $(s, t_1, t_2)\mapsto t_1e_1 + t_2e_2 + s e_3$, with error bounded by $O(\varepsilon)$. Thus by the Inverse Function Theorem, the map $\Psi$ is a diffeomorphism onto its image. Since the map $\Psi$ is $C^m$-close to the map $(s, t_1, t_2)\mapsto t_1e_1 + t_2e_2 + s e_3$, the image of $\Psi$ contains the cylinder
	\[ U = \{t_1e_1 + t_2e_2 + s e_3: |s|< 50,\ t_1^2 + t_2^2 < 10 \}.\]	
	Consequently, we can define a map $v_J: U\to (-50, 50)$ by 
	\[ v_J (x) := \pi_1\circ\Psi^{-1}(x),\]
	where $\pi(s, t_1, t_2) = s$ is the projection onto first factor. Note that the map $v_J$ satisfies
	\[\{D_s\}_{s\in J}\subset U, \quad \text{with}\quad D_s\subset v_J^{-1}(s).\]
	
	Moreover, on the cylinder $U$, the inverse of $\Psi$ is $C^1$ close to the map
	\[ x \mapsto (\langle x, e_3\rangle, \langle x, e_1\rangle, \langle x, e_2\rangle),\]
	with errors bounded by $O(\varepsilon)$. This gives the estimate
	\[ 1-C\varepsilon < |Dv_J| < 1+C\varepsilon.\]	
	Therefore, $v_J$ is a smooth submersion on the cylinder $U$. In particular this implies that the disks $\{D_s\}_{s\in J}$ form a smooth foliation on the cylinder $U$.
\end{proof}

\bigskip

\begin{corollary}{\label{local foliation}}
Associated with each point $x\in \Gamma_0$, there is a disk $\Delta_x$ of radius $1$	such that the followings hold:
\begin{itemize}
\item[(i)] $x\in\Delta_x$;
\item[(ii)] If $\Gamma\subset\Gamma_0$ is a segment of length less than $10$, then the one-parameter family of disks $\{\Delta_x\}_{x\in\Gamma}$ form a smooth foliation around the segment $\Gamma$, and the canonical map of foliation $u_{\Gamma}: \{\Delta_x\}_{x\in\Gamma}\to \Gamma$ satisfies
	\[ 1 - C\varepsilon \leq |Du_{\Gamma}| \leq 1 + C\varepsilon.\] 	
\end{itemize}
\end{corollary}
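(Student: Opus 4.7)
The plan is to obtain Corollary \ref{local foliation} as a direct reindexing of Proposition \ref{foliated disks}, replacing the smoothed parameter along $\tilde\gamma_0$ by one attached to points on the actual curve $\gamma_0$. For $x \in \Gamma_0$, write $x = \gamma_0(s_x)$ in the arc-length parametrization. By Lemma \ref{closeness}(i) we have $|x - \tilde{\gamma}_0(s_x)| \leq C\varepsilon$, so $x$ lies in the cylindrical neighborhood on which the foliation $\{D_s\}_{s\in J}$ from Proposition \ref{foliated disks} is defined for any interval $J$ of length less than $20$ containing $s_x$ in its interior. I would then set
\[ s^*(x) := v_J(x), \qquad \Delta_x := D_{s^*(x)}. \]
Part (i) is immediate: by definition of $v_J$, the disk $D_{s^*(x)} = v_J^{-1}(s^*(x))$ contains $x$. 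The value $s^*(x)$ is independent of the choice of $J$ (overlapping foliations agree, because the unique leaf through $x$ is a disk $D_s$), and the derivative bound from Proposition \ref{foliated disks} together with $|x-\tilde{\gamma}_0(s_x)|\leq C\varepsilon$ yields $|s^*(x) - s_x| \leq C\varepsilon$.

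For part (ii), given $\Gamma = \gamma_0(I)$ with $|I| < 10$, I would enlarge $I$ to an interval $J$ of length less than $20$ and apply Proposition \ref{foliated disks} on $J$. The map $s \mapsto s^*(\gamma_0(s))$ is a $C^1$-small perturbation of the identity on $I$ (its derivative lies in $[1-C\varepsilon, 1+C\varepsilon]$ by the bound on $|Dv_J|$), so its image $s^*(\Gamma)$ is an interval contained in $J$ of length at most $10 + C\varepsilon < 20$. Hence $\{\Delta_x\}_{x\in\Gamma} = \{D_s\}_{s \in s^*(\Gamma)}$ is a sub-family of the smooth foliation already provided by Proposition \ref{foliated disks}, and therefore forms a smooth foliation itself.

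Finally, to establish the derivative bound on $u_\Gamma$, a point $y$ in the foliated neighborhood lies in $\Delta_x$ if and only if $s^*(x) = v_J(y)$, so
\[ u_\Gamma(y) \;=\; \gamma_0 \circ (s^* \circ \gamma_0)^{-1} \circ v_J(y). \]
Since $|D\gamma_0| = 1$, $|Dv_J| \in [1-C\varepsilon, 1+C\varepsilon]$ from Proposition \ref{foliated disks}, and $(s^* \circ \gamma_0)^{-1}$ is a near-identity diffeomorphism of $\R$ with derivative in $[1-C\varepsilon, 1+C\varepsilon]$, the chain rule gives the desired bound (absorbing $\varepsilon^2$ terms into $C$). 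I do not foresee any serious obstacle; the corollary is essentially a cosmetic reindexing of Proposition \ref{foliated disks}, and the only care needed is the bookkeeping to confirm that compositions of near-identity maps produce only multiplicative errors of the form $1 + O(\varepsilon)$.
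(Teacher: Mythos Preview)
Your approach is correct and is essentially the same as the paper's: both define $\Delta_x$ to be the unique leaf $D_s$ through $x$ and then express $u_\Gamma$ as the composition $\gamma_0 \circ h^{-1} \circ v_J$, where $h(t)$ records the $s$--value of the leaf containing $\gamma_0(t)$. The only packaging difference is that the paper rederives $h$ via the Implicit Function Theorem applied to $\Phi(t,s)=\langle\tilde\gamma_0(s)-\gamma_0(t),\tilde\gamma_0'(s)\rangle$, whereas you simply set $h(t)=v_J(\gamma_0(t))$; these are the same function.

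One small point to tighten: from $|Dv_J|\in[1-C\varepsilon,1+C\varepsilon]$ alone you only get the \emph{upper} bound $|(s^*\circ\gamma_0)'(t)|\leq 1+C\varepsilon$, since $(s^*\circ\gamma_0)'(t)=Dv_J(\gamma_0(t))\cdot\gamma_0'(t)$ is a directional derivative and could in principle be small if $\gamma_0'(t)$ were nearly tangent to a leaf. For the lower bound you need the stronger fact, established inside the proof of Proposition~\ref{foliated disks}, that $\Psi^{-1}$ (hence $\nabla v_J$) is $C^1$--close to the coordinate projection onto $e_3$, together with $|\gamma_0'(t)-e_3|\leq C\varepsilon$ from Lemma~\ref{tangent vector estimate}. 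With that in hand your chain-rule argument goes through exactly as written.
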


\begin{proof}
Given a point $x_0$, let $t_0\in [0,L)$ be such that $x_0 = \gamma_0(t_0)$.\\ Since $|\gamma_0(t_0) - \tilde{\gamma}_0(t_0)|\leq \varepsilon$ by Lemma \ref{closeness}, we can find $s_0\in \mathbb{R}$ such that $\gamma_0(t_0)\in D_{s_0}$ and $|s_0 - t_0|\leq 1$. Now we consider the function 
\[ \Phi(t, s) := \langle\tilde{\gamma}_0(s) - \gamma_0(t), \tilde{\gamma}_0'(s)\rangle.\]
At the point $(t_0, s_0)$, we have $\Phi(t_0, s_0) = 0$ and 
\[ \frac{\partial\Phi}{\partial s}(t_0, s_0) = |\tilde{\gamma}_0'(s_0)|^2 + \langle\tilde{\gamma}_0(s_0) - \gamma_0(t_0), \tilde{\gamma}_0''(s_0)\rangle.\]
Since $|\tilde{\gamma}_0(s_0) - \gamma_0(t_0)|\leq 2$ and $|\tilde{\gamma}_0''(s_0)|\leq C\varepsilon$ by Lemma \ref{closeness}, we obtain
\[ 1 - C\varepsilon\leq \frac{\partial\Phi}{\partial s}(t_0, s_0)\leq 1 + C\varepsilon. \]
Subsequently the Implicit Function Theorem implies that there is a unique function $h: I\to\mathbb{R}$ defined on an open interval $I\subset [0,L)$ containing $t_0$ such that $|h(t)-t|\leq 1$ and 
\[ \Phi(t, h(t)) = 0.\]
 Since the point $x_0 = \gamma_0(t_0)$ is arbitrary, the function $h$ can be extended to $[0,L)$ so that there is a unique number $s = h(t)$ for each $t$ such that $|s-t|\leq 1$ and $\gamma_0(t)\in D_s$. Moreover, the function $h$ satisfies 
\begin{equation}{\label{h}}
	1 - C\varepsilon\leq |h'(t)| \leq 1 +C\varepsilon
\end{equation}
by the Implicit Function Theorem. In particular, the above estimate implies that $h$ is a diffeomorphism onto its image by the Inverse Function Theorem. Now, associated with each point $x\in\Gamma_0$, we define
\[ \Delta_x := D_{h\circ\gamma_0|_{[0,L)}^{-1}(x)},\]
where $\gamma_0|_{[0,L)}$ is the restriction of $\gamma_0$ on the interval $[0,L)$.
If $\Gamma\subset\Gamma_0$ is a segment of length 10, we may assume without loss of generality that $\Gamma = \gamma_0(J)$ for some interval $J\subset [0,L)$ of length less than 10. Then Proposition \ref{foliated disks} implies that the one-parameter family of disks $\{\Delta_x\}_{x\in\Gamma}$ coincides with the one-parameter family of disks $\{D_s\}_{s\in h(J)}$, which forms a smooth foliation round the segment $\Gamma$. Moreover, we note that the canonical map of foliation $u_{\Gamma}: \{\Delta_x\}_{x\in\Gamma}\to \Gamma$ is defined by $u_{\Gamma} = \gamma_0\circ h^{-1}\circ v_{h\circ\gamma_0|_{[0,L)}^{-1}(\Gamma)}$. Hence by Proposition \ref{foliated disks} and (\ref{h}) we have
\[ 1 - C\varepsilon \leq |Du_{\Gamma}| \leq 1 + C\varepsilon.\]
	
\end{proof}

\bigskip

\section{Proof of Theorem \ref{main thm}}

\begin{definition}
We define $\mathcal{S}$ to be the collection of all compact segments $\Gamma\subset\Gamma_0$ with the property that there exists a piecewise smooth embedded curve $\alpha\subset\Sigma$ with the following properties:	
\begin{itemize}
\item[(i)] The curve $\alpha$ has the same end-points as $\Gamma$;
\item[(ii)] The union $\alpha\cup\Gamma$ bounds a topological disk in $\Sigma$;
\item[(iii)] $\Len(\alpha)\leq 2\varepsilon$;
\item[(iv)] $\Len(\Gamma)\geq 1$, and the complement $\Gamma_0\setminus\Gamma$ has $\Len(	\Gamma_0\setminus\Gamma)\geq 1$.
\end{itemize}
\end{definition}

\bigskip

\begin{proposition}{\label{empty S}}
	The set $\mathcal{S}$ is empty.
\end{proposition}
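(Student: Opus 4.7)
The plan is a descent on length: assuming $\mathcal{S}\neq\emptyset$ and setting $L^*:=\inf_{\mathcal{S}}\Len$ (finite and at least $50$ by Corollary \ref{large length} applied to any $\Gamma\in\mathcal{S}$, since $|\hat{x}-\hat{y}|\leq\Len(\alpha)\leq 2\varepsilon$), pick $\Gamma\in\mathcal{S}$ with $\Len(\Gamma)\leq L^*+1$ and witness $\alpha,D$, and produce $\hat{\Gamma}\in\mathcal{S}$ with $\Len(\hat{\Gamma})<L^*$. Take $\Gamma'\subset\Gamma$ to be the sub-segment with arc-length parameter in $[t_{\hat{x}}+25,\,t_{\hat{x}}+35]$, so $|t_x-t_{\hat{x}}|\in[25,35]\leq 100$ for every $x\in\Gamma'$. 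Corollary \ref{local foliation} then furnishes a smooth foliation $\{\Delta_x\}_{x\in\Gamma'}$ of a neighborhood of $\Gamma'$ with canonical map $u_{\Gamma'}$ satisfying $|Du_{\Gamma'}|\leq 1+C\varepsilon$. For each $x\in\Gamma'$, let $\beta_x$ denote the connected component of $\Delta_x\cap D$ through $x$; by general position $\beta_x$ is a smooth arc with one endpoint at $x$ and the other on $\partial\Delta_x\cup\Gamma\cup\alpha$.

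The core estimate is the co-area formula applied to $u_{\Gamma'}|_{\Sigma}$:
\[
\int_{\Gamma'}\Len(\Delta_x\cap\Sigma)\,dx \;=\; \int_{\Sigma\cap U}|\nabla^\Sigma u_{\Gamma'}|\,dA \;\leq\; (1+C\varepsilon)\,\textup{Area}(\Sigma) \;\leq\; 2\varepsilon^2,
\]
so some $x\in\Gamma'$ admits $\Len(\beta_x)\leq\varepsilon^2/5$. For such an $x$, the second endpoint of $\beta_x$ cannot lie on $\partial\Delta_x$ (as $|x-\tilde{\gamma}_0(s_x)|\leq C\varepsilon$ by Lemma \ref{closeness} would force $\Len(\beta_x)\geq 1-C\varepsilon$), nor on $\alpha\subset B_{2\varepsilon}(\hat{x})$ (as $|x-\hat{x}|\geq 25-C\varepsilon$ by Lemma \ref{tangent vector estimate} would force $\Len(\beta_x)\geq 18$). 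Hence the second endpoint $p$ of $\beta_x$ lies on $\Gamma$.

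The core intermediate step is a local injectivity claim: if $\gamma_0(t)\in\Delta_x$ with $|t-t_x|\leq 100$, then $t=t_x$. The proof combines Lemma \ref{tangent vector estimate}, Lemma \ref{closeness}, and the defining equation $\langle\gamma_0(t)-\tilde{\gamma}_0(s_x),\,\tilde{\gamma}_0'(s_x)\rangle=0$ of $\Delta_x$: pairing the Taylor expansion of $\gamma_0(t)-\tilde{\gamma}_0(s_x)$ with $\tilde{\gamma}_0'(s_x)$ forces $|t-t_x|=O(\varepsilon)$, after which $|s_x-t|\leq C\varepsilon\leq 1$ and the global injectivity of the function $h$ from Corollary \ref{local foliation} yield $t=t_x$. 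Writing $p=\gamma_0(t_p)$, the condition $p\neq x$ together with this claim forces $|t_p-t_x|>100$, hence $t_p>t_x+100\geq 125$.

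Finally, let $\hat{\Gamma}\subset\Gamma$ be the sub-segment from $x$ to $p$ and $D'\subset D$ the sub-disk cut off by $\beta_x\cup\hat{\Gamma}$, which exists by the Jordan curve theorem inside the topological disk $D$. Then $(\beta_x,D')$ witnesses $\hat{\Gamma}\in\mathcal{S}$: conditions (i)--(iii) are immediate from $\Len(\beta_x)\leq\varepsilon^2\leq 2\varepsilon$, and (iv) holds since $\Len(\hat{\Gamma})=t_p-t_x>100\geq 1$ and $\Gamma_0\setminus\hat{\Gamma}\supset\Gamma_0\setminus\Gamma$ has length $\geq 1$. Since $t_x\geq 25$, the complement $\Gamma\setminus\hat{\Gamma}$ contains the arc from $\hat{x}$ to $x$ and hence has length $\geq 25$, giving $\Len(\hat{\Gamma})\leq\Len(\Gamma)-25\leq L^*-24<L^*$, contradicting the infimum; in the degenerate sub-case $\Len(\Gamma)\leq 125$ the bounds $\Len(\hat{\Gamma})>100$ and $\Len(\hat{\Gamma})\leq 100$ are already incompatible, so no valid $p$ exists and we obtain an immediate contradiction. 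The main difficulty is the local injectivity claim, which converts the smallness of $\Len(\beta_x)$ into the sharp dichotomy between the impossible case $p=x$ and the length-decrease case $|t_p-t_x|>100$.
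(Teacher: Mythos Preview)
Your argument is correct and follows the same length-descent strategy as the paper: choose a near-minimal $\Gamma\in\mathcal{S}$, use the disk foliation and the co-area formula over a short test segment to locate an $x$ with $\Delta_x\cap\Sigma$ short, and show the resulting transversal arc lands back on $\Gamma$, yielding a strictly shorter element of $\mathcal{S}$.

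The executions differ in two places. The paper places its test segment $J$ at arc-distance $2$--$10$ from $\hat{x}$ and applies co-area a \emph{second} time (to the curve $\hat{\alpha}$) to guarantee that the chosen disk satisfies $\Delta_{\tilde{x}}\cap\hat{\alpha}=\emptyset$; you instead place $\Gamma'$ at distance $25$--$35$, so that the short arc $\beta_x$ is metrically unable to reach $\alpha\subset B_{2\varepsilon}(\hat{x})$, eliminating that step entirely. Conversely, to force the far endpoint $p$ well along $\Gamma$, you prove a local injectivity claim valid out to arc-distance $100$ (combining Lemmas~\ref{tangent vector estimate} and \ref{closeness} with the diffeomorphism $h$ from the proof of Corollary~\ref{local foliation}); the paper needs only the foliation property on segments of length $<10$, which is immediate from Corollary~\ref{local foliation}. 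Both trade-offs are sound: your route saves one co-area computation at the cost of a somewhat longer injectivity argument. One cosmetic point: Corollary~\ref{local foliation} is stated for segments of length strictly less than $10$, so take $\Gamma'$ of length $9$ rather than $10$.
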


\begin{proof} 

Suppose in contrary that $\mathcal{S}$ is nonempty. Then there exists a segment $\hat{\Gamma}\in\mathcal{S}$ with the property that $\Len(\Gamma)\geq\Len(\hat{\Gamma}) - 1$ for all $\Gamma\in\mathcal{S}$. Let $\hat{x}, \hat{y}$ denote the end-points of $\hat{\Gamma}$. By the definition of $\mathcal{S}$, we can find a smooth embedded curve $\hat{\alpha}\subset\Sigma$ with the following properties:

\begin{itemize}
\item[(i)] $\hat{\alpha}$ has end-points $\hat{x}, \hat{y}$;
\item[(ii)] The union $\hat{\alpha}\cup\hat{\Gamma}$ bounds a topological disk in $\Sigma$;
\item[(iii)] $\Len(\hat{\alpha})\leq 2\varepsilon$.
\end{itemize}
Since $\Len(\hat{\Gamma})\geq 1$, Corollary \ref{large length} then implies that $\Len(\hat{\Gamma})\geq 50$.

Let $\hat{x}$ and $\hat{y}$ denote the end-points of $\hat{\Gamma}$. Let $J$ be a segment of $\hat{\Gamma}$ of length 4 such that $2\leq\dist_{\Gamma_0}(x, \hat{x})\leq 10$ for all $x\in J$. By Corollary \ref{local foliation}, there is a one-parameter family of disks $ \{\Delta_x\}_{x\in J}$ which foliates a neighborhood around the segment $J$, and a canonical map of foliation $u_{J}: \{\Delta_x\}_{x\in J}\to J$ near the segment $J$ which satisfies 
\[ u_{J}(y) = x\ \iff\ y\in \Delta_x\]
and 
	\[ 1 - C\varepsilon \leq |Du_{J}| \leq 1 + C\varepsilon.\]
				
		Since $\hat{\alpha}\cup\hat{\Gamma}$ is closed, the disks $\{\Delta_x\}_{x\in J}$ intersect with $\hat{\alpha}\cup\hat{\Gamma}$ in at least two points for almost every $x\in J$. Now we set
		\[J' := \{x\in J: \Delta_x\tcap\hat{\alpha} \neq \emptyset\}.\]
		We apply the co-area formula to give
		
		\begin{align*}
			2\varepsilon &\geq \mathcal{H}^1( \hat{\alpha}\cap\bigcup_{x\in J'}\Delta_x)\\
			&\geq\int_{J'}\int_{\Delta_x\cap\hat{\alpha}}\frac{1}{|D^{\hat{\alpha}}u_J|}d\mathcal{H}^0 d\mathcal{H}^1\\
			&\geq (1-C\varepsilon)\int_{J'}\mathcal{H}^0(\Delta_x\cap\hat{\alpha})d\mathcal{H}^1\\
			&\geq \frac{1}{2}\mathcal{H}^1(J').	
		\end{align*}
		Thus $\mathcal{H}^1(J - J')\geq \mathcal{H}^1(J)-4\varepsilon$.	 By Sard's theorem, we can find a subset $\tilde{J}\subset J - J'$ of full measure such that $\Delta_x$ intersect with $\Sigma$ transversally for every $x\in\tilde{J}$. Thus, we apply the co-area formula again, 		
		\begin{align*}
			\varepsilon^2 &\geq \mathcal{H}^2(\Sigma \cap\bigcup_{x\in \tilde{J}}\Delta_x)	\\
			&\geq \int_{\tilde{J}}\int_{\Delta_x\cap \Sigma}\frac{1}{|D^{\Sigma}u_J|}d\mathcal{H}^1 d\mathcal{H}^1\\
			&\geq (1-C\varepsilon)\int_{\tilde{J}}\mathcal{H}^1(\Delta_x\cap \Sigma)d\mathcal{H}^1\\
			&> (1-C\varepsilon) \mathcal{H}^1(\tilde{J})\cdot\inf_{x\in\tilde{J}}\mathcal{H}^1(\Delta_x\cap \Sigma)\\
			&\geq (1-C\varepsilon)(1-4\varepsilon)\cdot\inf_{x\in\tilde{J}}\mathcal{H}^1(\Delta_x\cap \Sigma).
		\end{align*}
		This gives $\inf_{x\in\tilde{J}}\mathcal{H}^1(\Delta_x\cap \Sigma)\leq \varepsilon$. Therefore, there exists a point $\tilde{x}\in\tilde{J}\subset J$ such that:
		\begin{itemize}
		\item $\Delta_{\tilde{x}}$ does not intersect with $\hat{\alpha}$;
		\item $\Delta_{\tilde{x}}$ intersects with $\Sigma$ transversally;
		\item $\Len(\Delta_{\tilde{x}}\cap\Sigma)\leq 2\varepsilon$.
		\end{itemize}
		Let $\tilde{\alpha}$ denote the connected component of $\Delta_{\tilde{x}}\cap\Sigma$ containing $\tilde{x}$, and let $\tilde{y}$ denote the endpoint of $\tilde{\alpha}$. Then $\Len(\tilde{\alpha})\leq 2\varepsilon$. Since $\tilde{\alpha}$ never intersects $\hat{\alpha}$, it follows that $\tilde{y}\in\hat{\Gamma}$. Moreover, $\tilde{y}\neq\tilde{x}$.
		Now we define $\tilde{\Gamma}$ to be the sub-segment of $\hat{\Gamma}$ with end-points $\tilde{x}$ and $\tilde{y}$. Then $\tilde{\Gamma}\cup\tilde{\alpha}$ bounds a topological disk. Next, since the segment $J$ has length less than 10, thus Corollary \ref{local foliation} implies that the disk $\Delta_{\tilde{x}}$ does not intersect $J$ at any point other than $\tilde{x}$. Consequently we have $\tilde{y}\notin J$ and $\Len(\tilde{\Gamma})\geq 1$. Moreover, since $\tilde{\Gamma}\subset\hat{\Gamma}$ and $\dist_{\Gamma_0}(\tilde{x}, \hat{x})\geq 2$, we have $\Len(\tilde{\Gamma})\leq \Len(\hat{\Gamma}) - 2$. In conclusion, we have shown that there exists a sub-segment $\tilde{\Gamma}$ of $\hat{\Gamma}$ and a smooth embedded curve $\tilde{\alpha}\subset\Sigma$ with the following properties: 
	\begin{itemize}
	\item 	The curve $\tilde{\alpha}$ has the same end-points as $\tilde{\Gamma}$;
	\item The union $\tilde{\alpha}\cup\tilde{\Gamma}$ bounds a topological disk in $\Sigma$;
	\item $\Len(\tilde{\alpha})\leq 2\varepsilon$;
	\item $1\leq \Len(\tilde{\Gamma}) \leq \Len(\hat{\Gamma}) - 2$.
	\end{itemize}
Therefore, we have found a segment $\tilde{\Gamma}\in\mathcal{S}$ with $\Len(\tilde{\Gamma}) \leq \Len(\hat{\Gamma}) - 2$. This contradicts to the definition of $\hat{\Gamma}$.

\end{proof}

\bigskip

\begin{definition} We define $\Lambda$ to be the set of all points $x\in\Gamma_0$ with the following properties:
	\begin{itemize}
		\item[(i)] The disk $\Delta_x$ intersects with $\Sigma,\ \Gamma_0$ and $\Gamma_1$ transversally;
		\item[(ii)] The possibly disconnected curve $\Delta_x\cap\Sigma$ has $\Len(\Delta_x\cap\Sigma)\leq\varepsilon$.
	\end{itemize}
\end{definition}

\bigskip

\begin{lemma}{\label{smallness outside A}}
	Whenever $J\subset\Gamma_0$ is a segment with $1\leq\Len(J)\leq 2$, we have
	\[\mathcal{H}^1(J\cap\Lambda)\geq (1 - 2\varepsilon)\Len(J).\]
\end{lemma}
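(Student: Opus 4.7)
The plan is to show that, modulo a Sard-theorem null set, $J \setminus \Lambda$ coincides with the set of points where condition (ii) in the definition of $\Lambda$ fails, and then to bound the measure of this failure set by the co-area formula in exactly the same style as in the proof of Proposition \ref{empty S}.

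First, since $\Len(J) \leq 2 < 10$, Corollary \ref{local foliation} produces a smooth foliation $\{\Delta_x\}_{x \in J}$ of a neighborhood of $J$ with canonical foliation map $u_J$ satisfying $1 - C\varepsilon \leq |Du_J| \leq 1 + C\varepsilon$. Applying Sard's theorem to the restrictions of $u_J$ to each of $\Sigma$, $\Gamma_0$, and $\Gamma_1$ shows that the set of $x \in J$ for which $\Delta_x$ fails to be transverse to at least one of these three sets is $\mathcal{H}^1$-null. Consequently, up to a null set,
\[ J \setminus \Lambda \;=\; J' \;:=\; \{x \in J : \Len(\Delta_x \cap \Sigma) > \varepsilon\}.\]

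Next I would apply the co-area formula to $u_J$ viewed as a function on $\Sigma \cap \bigcup_{x \in J'} \Delta_x$. Using $|D^{\Sigma} u_J| \leq |Du_J| \leq 1 + C\varepsilon$ together with $\textup{Area}(\Sigma) \leq \varepsilon^2$, this yields
\[ \varepsilon^2 \;\geq\; \textup{Area}\Bigl(\Sigma \cap \bigcup_{x \in J'} \Delta_x\Bigr) \;\geq\; (1 - C\varepsilon) \int_{J'} \mathcal{H}^1(\Delta_x \cap \Sigma)\, d\mathcal{H}^1 \;\geq\; (1 - C\varepsilon)\,\varepsilon \cdot \mathcal{H}^1(J'),\]
where the last inequality uses the defining property $\mathcal{H}^1(\Delta_x \cap \Sigma) > \varepsilon$ on $J'$. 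Rearranging gives $\mathcal{H}^1(J') \leq \varepsilon/(1-C\varepsilon)$.

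For $\varepsilon < 1/10000$ this bound is at most $2\varepsilon$, and combined with $\Len(J) \geq 1$ we obtain
\[\mathcal{H}^1(J \setminus \Lambda) \;\leq\; 2\varepsilon \;\leq\; 2\varepsilon\,\Len(J),\]
which is equivalent to the desired inequality. I do not expect any substantive obstacle in executing this plan; the argument is essentially the same co-area computation already carried out in Proposition \ref{empty S}, and the only point requiring mild care is the joint transversality with $\Sigma$, $\Gamma_0$, and $\Gamma_1$, which one handles by applying Sard's theorem to each of the three smooth restrictions of $u_J$ separately and taking the union of the resulting measure-zero sets.
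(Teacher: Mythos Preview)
Your proposal is correct and follows essentially the same approach as the paper: both arguments identify $J\setminus\Lambda$ (up to a Sard null set) with the set where $\mathcal{H}^1(\Delta_x\cap\Sigma)>\varepsilon$, apply the co-area formula for $u_J$ to bound its measure by $2\varepsilon$, and then use $\Len(J)\geq 1$ to conclude. The only difference is cosmetic---you spell out the Sard/transversality step explicitly, whereas the paper absorbs it into the phrase ``for almost every $x\in J\setminus\Lambda$.''
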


\begin{proof}
We have $\mathcal{H}^1(\Delta_x\cap\Sigma) > \varepsilon$ for almost every $x\in J\setminus\Lambda$ by definition. Since the canonical map of foliation $u_{J}: \{\Delta_x\}_{x\in J}\to J$ satisfies  $1 - C\varepsilon \leq |Du_{J}| \leq 1 + C\varepsilon$ by Corollary \ref{local foliation}, we can apply the co-area formula:
\begin{align*}
		\varepsilon^2 &\geq \mathcal{H}^2( \Sigma \cap\bigcup_{x\in J\setminus\Lambda}\Delta_x)	\\
		&\geq \int_{J\setminus\Lambda}\int_{\Delta_x\cap\Sigma}\frac{1}{|D^{\Sigma}u_{J}|}d\mathcal{H}^1 d\mathcal{H}^1\\
		&\geq (1-C\varepsilon)\int_{J\setminus\Lambda}\mathcal{H}^1(\Delta_x\cap\Sigma)d\mathcal{H}^1\\
		&> \frac{1}{2}\varepsilon\cdot \mathcal{H}^1(J\setminus\Lambda).
\end{align*}
Hence,
\[ \mathcal{H}^1(J\cap\Lambda)\geq \Len(J) - 2\varepsilon \geq (1 - 2\varepsilon)\Len(J).\]
\end{proof}

\bigskip

\begin{definition} Define a map $\phi:\Lambda\to\partial\Sigma$ as follows:
for each $x\in\Lambda$, define $\phi(x)$ to be the end-point of the connected component of $\Delta_x\cap\Sigma$ that contains $x$.	
\end{definition}

\bigskip

\begin{proposition}{\label{good map}}
	The map $\phi$ satisfies
\begin{itemize}
	\item[(i)] $\phi(\Lambda)\subset \Gamma_1$;
	\item[(ii)] $\phi$ is one-to-one.
\end{itemize}
\end{proposition}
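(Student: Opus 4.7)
The plan is to prove both assertions by contradiction, each time exhibiting a segment in $\mathcal{S}$ to contradict Proposition \ref{empty S}. Throughout, write $\alpha_x$ for the connected component of $\Delta_x \cap \Sigma$ containing $x$; by the transversality and length conditions in the definition of $\Lambda$, this is a smooth embedded arc in $\Sigma$ with $\Len(\alpha_x) \leq \varepsilon$ whose two endpoints are $x$ and $\phi(x)$, both lying on $\partial \Sigma = \Gamma_0 \sqcup \Gamma_1$.

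For assertion (i), I would suppose for contradiction that $\phi(x) \in \Gamma_0$ for some $x \in \Lambda$. The key input from Section 2 is Corollary \ref{local foliation}: on any segment of $\Gamma_0$ of length less than $10$ the disks $\{\Delta_{x'}\}$ form a smooth foliation, so distinct leaves are disjoint. Applying this to the length-$10$ sub-segment of $\Gamma_0$ centered at $x$ (which is well-defined since $\Len(\Gamma_0) \geq 100$ by Corollary \ref{Gamma length}) shows that $\Delta_x$ meets this sub-segment only at $x$, so $\dist_{\Gamma_0}(x, \phi(x)) \geq 5$. Consequently both arcs of $\Gamma_0$ joining $x$ to $\phi(x)$ have length at least $5$. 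Now $\alpha_x$ is an embedded arc in the annulus $\Sigma$ with both endpoints on the single boundary component $\Gamma_0$, so by the standard topology of annuli it cobounds a topological disk in $\Sigma$ with exactly one of those two arcs $\Gamma \subset \Gamma_0$. This $\Gamma$ satisfies every condition in the definition of $\mathcal{S}$, contradicting Proposition \ref{empty S}.

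For assertion (ii), I would suppose $\phi(x) = \phi(y)$ with $x \neq y$ in $\Lambda$. The same foliation argument yields $\dist_{\Gamma_0}(x, y) \geq 10$, since otherwise $x$ and $y$ would lie in a common segment of length less than $10$, forcing $\Delta_x \cap \Delta_y = \emptyset$ and precluding the shared endpoint $\phi(x) = \phi(y) \in \alpha_x \cap \alpha_y$. Hence both arcs of $\Gamma_0$ between $x$ and $y$ have length at least $10$. Next, I would form the concatenation $\alpha_x \cup \alpha_y$, a connected piecewise smooth subset of $\Sigma$ joining $x$ to $y$ of total length at most $2\varepsilon$, and extract from it a simple piecewise smooth embedded subarc $\alpha$ from $x$ to $y$ with $\Len(\alpha) \leq 2\varepsilon$. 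Since $\alpha$ has both endpoints on $\Gamma_0$, the same annulus topology yields an arc $\Gamma \subset \Gamma_0$ between $x$ and $y$ such that $\alpha \cup \Gamma$ bounds a disk in $\Sigma$, and both $\Len(\Gamma)$ and $\Len(\Gamma_0 \setminus \Gamma)$ are $\geq 10$; thus $\Gamma \in \mathcal{S}$, again contradicting Proposition \ref{empty S}.

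The main obstacle is not a calculation but the clean invocation of the topological fact that an embedded arc in an annulus with both endpoints on a single boundary circle cobounds a disk with exactly one of the two complementary arcs of that circle; this underlies both parts. A secondary technicality arises in (ii), where $\alpha_x$ and $\alpha_y$ could \emph{a priori} meet at points of $\Delta_x \cap \Delta_y$ besides $\phi(x)$, but this is handled by pruning to an embedded subarc, whose length is automatically still bounded by $2\varepsilon$.
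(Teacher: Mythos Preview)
Your proposal is correct and follows essentially the same approach as the paper's proof: both parts argue by contradiction, using Corollary \ref{local foliation} to force a lower bound on $\dist_{\Gamma_0}$ between the two relevant points, and then exhibiting a segment in $\mathcal{S}$ to contradict Proposition \ref{empty S}. Your extraction of an embedded subarc in part (ii) is exactly what the paper does explicitly (it takes the first point $p$ at which $\alpha_x$ meets $\alpha_y$, starting from $x$, and concatenates the two resulting sub-arcs), and the topological fact about embedded arcs in annuli that you flag as the main obstacle is likewise invoked without further comment in the paper.
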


\begin{proof}
To prove $\phi(\Lambda)\subset \Gamma_1$, we suppose in contrary that there exist points $x\in \Lambda$ and $y\in \Gamma_0$ such that $y=\phi(x)$. Let $\Gamma$ denote a segment of $\Gamma_0$ with $x, y$ being its end-points. Then Corollary \ref{local foliation} implies that both $\Gamma$ and $\Gamma_0\setminus\Gamma$ have length not less than 10. Moreover, the assumption $\Len(\Delta_x\cap\Sigma)\leq\varepsilon$ together with the definition of $\phi$  imply the existence of a smooth embedded curve $\alpha$ which has the same end-points as $\Gamma$, $\ \Len(\alpha)\leq\varepsilon$ and bounds a disk in $\Sigma$ with $\Gamma$. This implies that $\Gamma\in \mathcal{S}$, contradicting to the fact that $\mathcal{S} = \emptyset$.

Next, suppose in contrary that there exists two points $x, y\in\Lambda$ and a point $z\in\Gamma_1$ such that $z = \phi(x) = \phi(y)$. Again, we denote by $\Gamma$ a segment of $\Gamma_0$ with $x, y$ being its end-points. Then Corollary \ref{local foliation} implies that both $\Gamma$ and $\Gamma_0\setminus\Gamma$ have length not less than 10. On the other hand, if we let $\alpha, \beta$ to denote the components of $\Delta_x\cap\Sigma, \Delta_y\cap\Sigma$ that contain $x, y$ respectively, then we can find a piecewise smooth embedded curve $\gamma\subset\alpha\cup\beta$ in $\Sigma$ with end-points $x, y$ by concatenating the parts of $\alpha$ and $\beta$ in the following way: Denote by $p$ the point at which $\alpha$ meets with $\beta$ the first time starting from $x$. Let $\alpha'$ to be the sub-segment of $\alpha$ with end-points $x, p$ and let $\beta'$ to be the sub-segment of $\beta$ with end-points $p, y$. Then we can define $\gamma := \alpha'\sqcup\beta'$ to be the desired piecewise smooth embedded curve in $\Sigma$ with end-points being $x, y$. Moreover, we have $\Len(\gamma)\leq 2\varepsilon$. This implies that $\Gamma\in \mathcal{S}$, which contradicts to the fact that $\mathcal{S} = \emptyset$.
\end{proof}

\bigskip

\begin{proof}[Completion of the proof of Theorem \ref{main thm}]\

By Proposition \ref{good map}, we have $\phi(\Lambda)\subset\Gamma_1$ and
	\[\phi(\Lambda)\cap\Delta_x = \{\phi(x)\}\]
	for every $x\in \Lambda$. If $J\subset\Gamma_0$ is a segment with $1\leq\Len(J)\leq 2$, we can apply the co-area formula to $\phi(J\cap\Lambda)$ with the canonical map of foliation $u_{J}$ given by Corollary \ref{local foliation}, we then have
		\begin{align*}
			\mathcal{H}^1(\phi(J\cap\Lambda)) &\geq\int_{J\cap\Lambda}\int_{\Delta_x\cap\phi(J\cap\Lambda)}\frac{1}{|D^{\Gamma_1}u_{J}|}d\mathcal{H}^0 d\mathcal{H}^1\\
			&\geq (1-C\varepsilon)\int_{J\cap\Lambda}\mathcal{H}^0(\Delta_x\cap\phi(J\cap\Lambda))d\mathcal{H}^1\\
			&\geq (1-C\varepsilon)\mathcal{H}^1(J\cap\Lambda).
		\end{align*}
Then Lemma \ref{smallness outside A} subsequently gives
\begin{equation}{\label{phi estimate}}
	\mathcal{H}^1(\phi(J\cap\Lambda))\geq (1 - C\varepsilon)\cdot\Len(J)
\end{equation}
whenever $J\subset\Gamma_0$ is a segment with $1\leq\Len(J)\leq 2$.
Lastly, we divide $\Gamma_0$ into a disjoint union of segments of length between 1 and 2, that is,
\[ \Gamma_0 = \sqcup_{i = 1}^NJ_i,\quad 1\leq\Len(J_i)\leq 2.\]
Then (\ref{phi estimate}) gives
	\begin{align*}
		\Len(\Gamma_1) &\geq \sum_{i=1}^N\mathcal{H}^1(\phi(J_i\cap\Lambda))\\
		&\geq (1-C\varepsilon)\sum_{i=1}^N\Len(J_i)\\
		&= (1-C\varepsilon)\cdot\Len(\Gamma_0).
	\end{align*}
This completes the proof of Theorem \ref{main thm}.
\end{proof}

\bigskip

\appendix

\section{Extension to the Riemannian case}{\label{appendix a}}

In this section, we will discuss how to extend our argument to the Riemannian case. All the hypothesis in Theorem \ref{main thm M} are assumed throughout this section. The obstacle of carrying over the argument in Theorem \ref{main thm} to Theorem \ref{main thm M} is to construct a one-parameter family of disks which form a smooth foliation when they are restricted to short sub-segments of $\Gamma_0$ in the Riemannian case, that is, we seek an analogue of Corollary \ref{local foliation} in the Riemannian case where the ambient space is a Riemannian manifold.\\

 We begin with fixing a parametrization $\gamma_0:\mathbb{R}\to\Gamma_0\subset M$ of $\Gamma_0$ by arc-length so that $\gamma_0$ is periodic with period $L = \Len(\Gamma_0)$. Since $\inj(M,g)\geq 1000$, then for any $|s-s_0|\leq 100$ we may write
 \[ \gamma_0(s) := \exp_{\gamma_0(s_0)}(\xi(s)),\]
 for some unique function $\xi: [s_0 - 100, s_0 + 100]\to T_{\gamma_0(s_0)}M$. With this notation, we have the following analogy of Lemma \ref{tangent vector estimate}:
\bigskip

\begin{lemma}{\label{tangent vector estimate M}}
For $|s-s_0|\leq 100$, the function $\xi(s)$ satisfies
	\begin{itemize}
		\item[(i)]	$|\xi(s) - (s-s_0)\gamma_0'(s_0)|\leq C\varepsilon$;
		\item[(ii)] $|\xi'(s) - \gamma_0'(s_0)| \leq C\varepsilon$.
	\end{itemize}
\end{lemma}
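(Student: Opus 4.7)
The plan is to establish (i) by comparing $\gamma_0$ with the reference geodesic $\sigma(s):=\exp_p((s-s_0)\gamma_0'(s_0))$ (where $p:=\gamma_0(s_0)$), which corresponds to the straight ray $s\mapsto (s-s_0)\gamma_0'(s_0)$ in normal coordinates at $p$; then derive (ii) from (i) by differentiation together with a Jacobi-field analysis of $(d\exp_p)^{-1}$.

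First, by subdividing $[s_0,s]$ into at most $\lceil|s-s_0|\rceil\leq 100$ consecutive unit sub-intervals and using that parallel transport along $\gamma_0$ is a linear isometry, the hypothesis yields
\[|P_{\gamma_0(s),p}(\gamma_0'(s))-\gamma_0'(s_0)|\leq 100\,\varepsilon\qquad\text{for all }|s-s_0|\leq 100.\]
Since $\inj(M,g)\geq 1000$ and compactness of $M$ gives a sectional curvature bound $K$, Fermi coordinates along $\sigma$ are well-defined on a tube of ample radius and the metric there is close to the product metric up to curvature corrections. Writing $\gamma_0(s)$ in these coordinates as $(u(s),\eta(s))$ with $\eta(s)$ perpendicular to $\sigma$, the parallel-transport bound above — together with the facts that $\sigma'$ is parallel along $\sigma$ and that parallel transport along $\gamma_0$ differs from Fermi transport by a curvature-bounded error — forces $\gamma_0'(s)$ to lie within $C\varepsilon$ of the $u$-direction. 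Consequently $|u'(s)-1|\leq C\varepsilon$ and $|\eta'(s)|\leq C\varepsilon$; integrating from $s_0$ gives $|u(s)-s|,\,|\eta(s)|\leq C\varepsilon$, whence $\dist_M(\gamma_0(s),\sigma(s))\leq C\varepsilon$. Since $\exp_p$ is bi-Lipschitz on the $1000$-ball of $T_pM$ with constants depending on $K$, this translates to (i).

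For (ii), differentiating $\exp_p\circ\xi=\gamma_0$ gives $\xi'(s)=(d\exp_p)^{-1}_{\xi(s)}(\gamma_0'(s))$. Using (i) to place $\xi(s)$ within $C\varepsilon$ of the radial segment $(s-s_0)\gamma_0'(s_0)$, the Jacobi-field identity $(d\exp_p)_{(s-s_0)\gamma_0'(s_0)}(\gamma_0'(s_0))=\sigma'(s)$, the smooth dependence of $(d\exp_p)_v$ on $v$, and a holonomy estimate — bounding the discrepancy between parallel transport along $\gamma_0$ and along $\sigma$ by $K$ times the area of the thin enclosed region, which by (i) is $O(\varepsilon)$ — one decomposes $\xi'(s)-\gamma_0'(s_0)$ into finitely many terms, each of size $O(\varepsilon)$.

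The main obstacle is the geodesic comparison in the second step. Carrying it out over arc length $100$ produces Gronwall-type constants of the form $e^{CK}$; since $K$ depends only on the ambient space, these are absorbed into the final $C$ allowed by the statement.
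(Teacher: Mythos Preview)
Your sketch contains the right ingredients (the iterated parallel-transport hypothesis, curvature/holonomy estimates, and a Gronwall step), but the argument for (i) as written is circular. You assert that ``parallel transport along $\gamma_0$ differs from Fermi transport by a curvature-bounded error'' and conclude immediately that $\gamma_0'(s)$ lies within $C\varepsilon$ of the $u$-direction. But the holonomy discrepancy between parallel transport along $\gamma_0$ and the Fermi frame along $\sigma$ is bounded by curvature times the area of the region between $\gamma_0$ and $\sigma$, i.e.\ by something like $C\int_{s_0}^{s}|\eta(r)|\,dr$. You do not yet know $|\eta|$ is small; that is exactly what you are trying to prove. What you actually obtain is the coupled inequality
\[
|\eta'(s)|\;\leq\; C\varepsilon \;+\; C\int_{s_0}^{s}|\eta(r)|\,dr,
\]
and only after integrating once and applying Gronwall do you get $|\eta(s)|\leq C\varepsilon$. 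Your closing remark about ``Gronwall-type constants'' suggests you sense this, but the Gronwall must be set up \emph{inside} the proof of (i), not invoked after the fact to explain the size of $C$. The same circularity reappears in your proof of (ii): the holonomy estimate you quote relies on the area bound coming from (i), so the two estimates are genuinely intertwined and must be closed simultaneously.

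The paper avoids this tangle by reversing the order: it proves (ii) first, directly in normal coordinates at $p$ (no Fermi tube). Along each radial geodesic $t\mapsto\exp_p(t\xi(s))$ it compares the parallel transport $X(s,1)$ of $\gamma_0'(s_0)$ with the Jacobi field $V(s,1)=(d\exp_p)_{\xi(s)}(\gamma_0'(s_0))$, and compares $X(s,1)$ with $P_{s_0,s}(\gamma_0'(s_0))$ by differentiating in $s$. Both errors are controlled by wedge products $|\xi\wedge\xi'|$ and $|\gamma_0'(s_0)\wedge\xi|$, which are in turn bounded by $M(s):=|\xi'(s)-\gamma_0'(s_0)|$ and its integral. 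This yields the clean integral inequality $M(s)\leq\varepsilon+C\int_{s_0}^{s}M(r)\,dr$, to which Gronwall applies directly; (i) then follows by a single integration of (ii). Your Fermi-coordinate route can be made to work, but it requires the same kind of coupled Gronwall argument, carried out explicitly rather than asserted.
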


\begin{proof}
We only prove the assertion for $s \in [s_0,s_0+100]$. The case $s \in [s_0-100,s_0]$ works in analogous fashion. Let
 \[\xi(s) = \exp_{\gamma_0(s_0)}^{-1}(\gamma_0(s))\]
  for $s \in [s_0,s_0+100]$. Note that $\xi(s_0)=0$ and $\xi'(s_0) = \gamma_0'(s_0)$. Moreover, since $|\gamma_0'(s)|=1$, we obtain $|\xi'(s)| \leq C$ for all $s \in [s_0,s_0+100]$. Let's consider the following family of geodesics
   \[F(s,t) = \exp_{\gamma_0(s_0)}(t \xi(s))\]
    for $s \in [s_0,s_0+100]$ and $t \in [0,1]$. From the definition of $F$, we clearly have $F(s_0,t) = \gamma_0(s_0)$, $F(s,0) = \gamma_0(s_0)$, and $F(s,1) = \gamma_0(s)$.\\ 
  
  \textbf{\underline{Step 1}:} 
   
    For each $s$, we denote by $X(s,t)$ the parallel transport of the vector $\gamma_0'(s_0)$ along the geodesic $t \mapsto F(s,t)$. Clearly, $X(s_0,t) = \gamma_0'(s_0)$, $X(s,0) = \gamma_0'(s_0)$, and $\nabla_t X(s,t) = 0$. Let $Y(s,t) = \nabla_s X(s,t)$. Clearly, $Y(s,0) = 0$. Moreover,  
    
\begin{align*} 
|\nabla_t Y| 
&= |\nabla_t \nabla_s X| \\ 
&= |\nabla_t \nabla_s X - \nabla_s \nabla_t X| \\ 
&= \left|R\left(\frac{\partial F}{\partial t},\frac{\partial F}{\partial s}\right)X\right| \\ 
&\leq C \left|\frac{\partial F}{\partial t} \wedge \frac{\partial F}{\partial s}\right| |X|.
\end{align*}\\
Since $\frac{\partial F}{\partial t} = (d\exp_{\gamma_0(s_0)})_{t \xi(s)}(\xi(s))$ and $\frac{\partial F}{\partial s} = (d\exp_{\gamma_0(s_0)})_{t \xi(s)}(t\xi'(s))$, we consequently have
\[ |\nabla_t Y| \leq C |\xi(s) \wedge \xi'(s)|.\]
Let us fix $s$ and integrate this inequality over $t \in [0,1]$. This yields 
\[|Y(s,1)| \leq C |\xi(s) \wedge \xi'(s)|.\] 
In other words, 
\[|\nabla_s X(s,1)| \leq C |\xi(s) \wedge \xi'(s)|.\] 
We now integrate this inequality over $s$. This gives 
\[|X(s,1) - P_{s_0,s} (\gamma_0'(s_0))| \leq C \int_{s_0}^s |\xi(r) \wedge \xi'(r)| \, dr,\] 
where $P_{s_0,s}: T_{\gamma_0(s_0)} M \to T_{\gamma_0(s)} M$ denotes the parallel transport along $\gamma_0$. By assumption, $|P_{s_0,s} (\gamma_0'(s_0)) - \gamma_0'(s)| \leq \varepsilon$. Therefore, 
\begin{equation}{\label{parallel vs parallel}}
	|X(s,1) - \gamma_0'(s)| \leq \varepsilon + C \int_{s_0}^s |\xi(r) \wedge \xi'(r)| \, dr.
\end{equation}\\

 \textbf{\underline{Step 2}:} \
  
For each $s$, we denote by $V(s,t)$ the Jacobi field along the geodesic $t \mapsto F(s,t)$ with initial condition $V(s,0) = 0$ and $\nabla_t V(s,t) |_{t=0} = \gamma_0'(s_0)$. We claim the following inequality:
\begin{equation}{\label{parallel vs exp}}
	|V(s,1) - X(s,1)| \leq C |\gamma_0'(s_0) \wedge \xi(s)|.
\end{equation}\\
Indeed, the Jacobi equation implies

\[|\nabla_t\nabla_t V| = \left|R\left(V, \frac{\partial F}{\partial t}\right)\frac{\partial F}{\partial t}\right|\leq C\left|V\wedge \frac{\partial F}{\partial t}\right|\left|\frac{\partial F}{\partial t}\right|.\]
Since  $\frac{\partial F}{\partial t}(s,t) =  (d\exp_{\gamma_0(s_0)})_{t\xi(s)}(\xi(s))$ and $V(s,t) = (d\exp_{\gamma_0(s_0)})_{t\xi(s)}(t\gamma_0'(s_0))$, it follows that
\[|\nabla_t\nabla_t V|\leq C|\gamma_0'(s_0)\wedge \xi(s)|.\]\\
Consequently, the vector field $W(s,t) = V(s,t) - tX(s,t)$ satisfies $W(s,0) = 0$, $\nabla_t W(s,t)|_{t=0} = 0$, and $|\nabla_t \nabla_t W| \leq C|\gamma_0'(s_0)\wedge \xi(s)|$. Integrating this inequality over $t\in [0,1]$ yields $|W(s,1)| \leq C|\gamma_0'(s_0)\wedge \xi(s)|$, which implies (\ref{parallel vs exp}).\\

 \textbf{\underline{Step 3}:} \

Putting (\ref{parallel vs parallel}) and (\ref{parallel vs exp}) together, we obtain 
\[|V(s,1) - \gamma_0'(s)| \leq \varepsilon + C |\gamma_0'(s_0) \wedge \xi(s)| + C \int_{s_0}^s |\xi(r) \wedge \xi'(r)| \, dr.\] 
At this point, observe that $V(s,1) = (d\exp_{\gamma_0(s_0)})_{\xi(s)}(\gamma_0'(s_0))$ and $\gamma_0'(s) = (d\exp_{\gamma_0(s_0)})_{\xi(s)}(\xi'(s))$. Since $\inj(M,g)\geq 1000$, the differential $(d\exp_{\gamma_0(s_0)})_{\xi(s)}: T_{\gamma_0(s_0)} M \to T_{\gamma_0(s)} M$ is invertible and we have a uniform bound for its inverse. Consequently, 
\[|\gamma_0'(s_0) - \xi'(s)| \leq \varepsilon + C |\gamma_0'(s_0) \wedge \xi(s)| + C \int_{s_0}^s |\xi(r) \wedge \xi'(r)| \, dr.\] 
Let $M(s) := |\xi'(s) - \gamma_0'(s_0)|$. Then $|\xi(s) - (s-s_0) \, \gamma_0'(s_0)| \leq \int_{s_0}^s M(r) \, dr$. This implies 
\[|\gamma_0'(s_0) \wedge \xi(s)| = |\gamma_0'(s_0) \wedge (\xi(s) - (s-s_0) \, \gamma_0'(s_0))| \leq C \int_{s_0}^s M(r) \, dr\] 
and 
\begin{align*} 
|\xi(s) \wedge \xi'(s)| 
&\leq |\xi(s) \wedge (\xi'(s) - \gamma_0'(s_0))| + |\xi(s) \wedge \gamma_0'(s_0)| \\ 
&\leq C M(s) + C \int_{s_0}^s M(r) \, dr. 
\end{align*}
Putting everything together, we obtain 
\[M(s) \leq \varepsilon + C \int_{s_0}^s M(r) \, dr\] 
for all $s \in [s_0,s_0+100]$. Consequently Gronwall's inequality gives $M(s) \leq C\varepsilon$ for all $s \in [s_0,s_0+100]$. This implies the lemma.

\end{proof}\

Thus, by using the above lemma we can adapt the proofs of Corollary \ref{Gamma length} and Corollary \ref{large length} to derive the following:

\begin{corollary}{\label{large length M}}\
\begin{itemize}
	\item[(i)]	$\Len(\Gamma_0)\geq 100$.
	\item[(ii)] 	Suppose that $\hat{\Gamma}$ is a segment in $\Gamma_0$ with end-points $\hat{x}, \hat{y}$. If $\dist_M(\hat{x}, \hat{y})\leq 10\varepsilon$ and $\Len(\hat{\Gamma})\geq 1$, then 
	\[\Len(\hat{\Gamma})\geq 50.\]  	
\end{itemize}
\end{corollary}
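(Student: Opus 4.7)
The plan is to mirror, in the Riemannian setting, the two short contradiction arguments that proved Corollary \ref{Gamma length} and Corollary \ref{large length}, with the only substantive change being to replace the secant vector $\gamma_0(s) - \gamma_0(s_0)$ by the exponential-chart vector $\xi(s) = \exp_{\gamma_0(s_0)}^{-1}(\gamma_0(s))$ and to invoke Lemma \ref{tangent vector estimate M} in place of Lemma \ref{tangent vector estimate}. The key observation is that because $\inj(M,g) \geq 1000$, the quantity $|\xi(s)|$ equals $\dist_M(\gamma_0(s_0), \gamma_0(s))$ whenever that distance is less than the injectivity radius, so it plays exactly the role that $|\gamma_0(s) - \gamma_0(s_0)|$ played in the Euclidean arguments.

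For assertion (i), I will argue by contradiction: suppose $L := \Len(\Gamma_0) < 100$. Then $\gamma_0$, being periodic with period $L$, satisfies $\gamma_0(L) = \gamma_0(0)$, so in the exponential chart at $\gamma_0(0)$ we have $\xi(L) = 0$. Since $|L - 0| = L < 100$, Lemma \ref{tangent vector estimate M}(i) applies and gives $|L\,\gamma_0'(0)| = |\xi(L) - L\,\gamma_0'(0)| \leq C\varepsilon$. This forces $L \leq C\varepsilon$, contradicting the hypothesis $\Len(\Gamma_0) \geq 1$ provided $\varepsilon < 1/10000$.

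For assertion (ii), I will again argue by contradiction: suppose $L := \Len(\hat{\Gamma}) < 50$. Parametrize so that $\gamma_0(0) = \hat{x}$ and $\gamma_0(L) = \hat{y}$. Since $\dist_M(\hat{x}, \hat{y}) \leq 10\varepsilon \ll \inj(M,g)$, the vector $\xi(L) = \exp_{\gamma_0(0)}^{-1}(\hat{y})$ is well defined and satisfies $|\xi(L)| = \dist_M(\hat{x}, \hat{y}) \leq 10\varepsilon$. Because $|L - 0| = L < 100$, Lemma \ref{tangent vector estimate M}(i) yields $|\xi(L) - L\,\gamma_0'(0)| \leq C\varepsilon$, so
\[ L = |L\,\gamma_0'(0)| \leq |\xi(L)| + C\varepsilon \leq 10\varepsilon + C\varepsilon, \]
which contradicts $\Len(\hat{\Gamma}) \geq 1$ for $\varepsilon < 1/10000$.

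There is essentially no obstacle beyond what was already overcome in Lemma \ref{tangent vector estimate M}; the only subtlety worth flagging is the passage between $|\xi(L)|$ and the Riemannian distance, which is immediate from the injectivity-radius hypothesis. One may optionally replay the Euclidean squared computation (expanding $|\xi(L) - L\gamma_0'(0)|^2$ and discarding the positive $|\xi(L)|^2$ term) to exactly match the form of the proof of Corollary \ref{large length}, but the linear estimate above is already sharp enough for the desired contradiction.
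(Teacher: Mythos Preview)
Your proposal is correct and follows exactly the approach the paper indicates: the paper's ``proof'' consists solely of the sentence that one adapts the proofs of Corollary \ref{Gamma length} and Corollary \ref{large length} using Lemma \ref{tangent vector estimate M}, which is precisely what you have carried out. Your use of the triangle inequality in part (ii) is a harmless simplification of the squared-norm expansion in Corollary \ref{large length}, and you rightly flag this.
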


\bigskip

Next, we will illustrate how to modify the definition of $\tilde{\gamma}_0$ in the Riemannian case.  Similar to Section 2, the idea is to replace $\Gamma_0$ by a broken geodesic, and then smooth out the corners using a cut-off function. we fix the same cut-off function $\chi:\mathbb{R}\to\mathbb{R}$ as in Section 2 and denote $L = \Len(\Gamma_0)$.  
We then fix a number $k\in [L, 2L]$ and divide $[0, L]$ into sub-intervals of length $\frac{L}{k}\in [\frac{1}{2}, 1]$.
Associated with the curve $\gamma_0$, we define a curve $\tilde{\gamma}_0$ by

\begin{align}{\label{smoothing M}}
	\tilde{\gamma}_0(s) &:= \exp_{\gamma_0\left(\frac{jL}{k}\right)} \Bigg\{ \left(\frac{ks}{L} - j\right)\chi\left(\frac{ks}{L} - j\right) \exp_{\gamma_0\left(\frac{jL}{k}\right)}^{-1}\left[\gamma_0\left(\frac{(j+1)L}{k}\right)\right]\notag\\
	&\quad\quad\quad + \left( j - \frac{ks}{L} \right)\left( 1 -\chi\left(\frac{ks}{L} - j\right)\right)\exp_{\gamma_0\left(\frac{jL}{k}\right)}^{-1}\left[ \gamma_0\left(\frac{(j-1)L}{k}\right) \right]\Bigg\},
\end{align}\\
for $s\in \left[\frac{(j - 1/2)L}{k}, \frac{(j + 1/2)L}{k}\right]$, $j = \frac{1}{2}, \frac{3}{2}, .., k -\frac{1}{2}$. 

\bigskip

\begin{lemma}{\label{geodesic}}
 The curve $\tilde{\gamma}_0$ defined by (\ref{smoothing M}) is smooth.	
\end{lemma}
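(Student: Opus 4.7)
The plan is to follow the proof of Lemma \ref{smoothness} in the Euclidean setting essentially verbatim, the only new ingredient being that straight line segments are replaced by (uniquely defined) minimizing geodesics. Smoothness is automatic in the interior of each sub-interval $\left[\frac{(j-1/2)L}{k}, \frac{(j+1/2)L}{k}\right]$, since the exponential map is smooth and the expression inside the outer $\exp$ is a smooth vector-valued function of $s$; note that consecutive points $\gamma_0(jL/k)$ are at distance at most $L/k\leq 1$ from one another, which is far below $\inj(M,g)=1000$, so $\exp^{-1}_{\gamma_0(jL/k)}$ is well-defined and smooth on a neighborhood of them. The only matter is thus to verify smoothness at the junction points $s=\frac{(j-1/2)L}{k}$ where two adjacent sub-intervals meet.

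Near such a junction I would exploit the support properties of $\chi$. On $s\in\left[\frac{(j-1/2)L}{k},\frac{(j-1/4)L}{k}\right]$ we have $\tfrac{ks}{L}-j\in[-\tfrac12,-\tfrac14]$, so $\chi(\tfrac{ks}{L}-j)=0$, and the formula collapses to
\[
\tilde{\gamma}_0(s)=\exp_{\gamma_0(jL/k)}\!\Bigl\{\bigl(j-\tfrac{ks}{L}\bigr)\,\exp_{\gamma_0(jL/k)}^{-1}\bigl[\gamma_0\bigl(\tfrac{(j-1)L}{k}\bigr)\bigr]\Bigr\},
\]
i.e.\ to the unique minimizing geodesic from $\gamma_0(jL/k)$ to $\gamma_0((j-1)L/k)$, parametrized by the affine parameter $\tau(s)=j-\tfrac{ks}{L}$. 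Symmetrically, on $\left[\frac{(j-3/4)L}{k},\frac{(j-1/2)L}{k}\right]$ (which lies in the sub-interval centered at $j-1$) we have $\tfrac{ks}{L}-(j-1)\in[\tfrac14,\tfrac12]$, so $\chi=1$ and the formula collapses to
\[
\tilde{\gamma}_0(s)=\exp_{\gamma_0((j-1)L/k)}\!\Bigl\{\bigl(\tfrac{ks}{L}-j+1\bigr)\,\exp_{\gamma_0((j-1)L/k)}^{-1}\bigl[\gamma_0\bigl(\tfrac{jL}{k}\bigr)\bigr]\Bigr\},
\]
which is the same minimizing geodesic, now parametrized from $\gamma_0((j-1)L/k)$ by $\sigma(s)=\tfrac{ks}{L}-j+1=1-\tau(s)$.

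Since both expressions describe the same smooth curve with affinely related parameters, they define the same smooth function of $s$ on a neighborhood of $s=\frac{(j-1/2)L}{k}$; consequently $\tilde{\gamma}_0$ is $C^\infty$ across every junction, and globally smooth. The only subtlety, and the main obstacle, is the need to invoke \emph{uniqueness} of the minimizing geodesic joining the two points so that the two exponential-map descriptions really trace the same curve. This is precisely what the hypothesis $\inj(M,g)\geq 1000$ buys us, in conjunction with the bound $L/k\leq 1$ on consecutive distances. Once that uniqueness is in place the remainder of the verification is purely a matter of unwinding the cut-off function, exactly as in the Euclidean case.
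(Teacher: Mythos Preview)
Your proposal is correct and follows essentially the same approach as the paper: both observe that on the overlap region near $s=\frac{(j-1/2)L}{k}$ the cut-off forces $\tilde{\gamma}_0$ to coincide with the minimizing geodesic between $\gamma_0((j-1)L/k)$ and $\gamma_0(jL/k)$, described from either endpoint via affinely related parameters. Your write-up is slightly more explicit in justifying the reparametrization identity through uniqueness of the minimizing geodesic (via $\inj(M,g)\geq 1000$), which the paper leaves implicit.
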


\begin{proof}
Observe that for $s\in \left[ \frac{(j - 1/2)L}{k}, \frac{(j - 1/4)L}{k}\right)$, we have

\begin{align*}
	\tilde{\gamma}_0(s) &= 	\exp_{\gamma_0\left(\frac{jL}{k}\right)}\left\{\left(j -\frac{ks}{L} \right)\cdot\exp_{\gamma_0\left(\frac{jL}{k}\right)}^{-1}\left[ \gamma_0\left(\frac{(j-1)L}{k}\right) \right]\right\}\\
	&= \exp_{\gamma_0\left(\frac{(j-1)L}{k}\right)}\left\{  \left(\frac{ks}{L} - j + 1 \right)\cdot \exp_{\gamma_0\left(\frac{(j-1)L}{k}\right)}^{-1}\left[ \gamma_0\left(\frac{jL}{k}\right) \right]\right\}.
\end{align*}
Similarly, for $s\in \left[ \frac{(j + 1/4)L}{k}, \frac{(j + 1/2)L}{k}\right)$, we have

\begin{align*}
	\tilde{\gamma}_0(s) &= \exp_{\gamma_0\left(\frac{jL}{k}\right)} \left\{  \left(\frac{ks}{L} - j \right)\cdot \exp_{\gamma_0\left(\frac{jL}{k}\right)}^{-1} \left[ \gamma_0\left(\frac{(j+1)L}{k}\right)  \right]\right\}.
\end{align*}
From this, we can see that $\tilde{\gamma}_0(s)$ is smooth near $s = \frac{(j - 1/2)L}{k}$ and $s = \frac{(j + 1/2)L}{k}$.

\end{proof}

\bigskip

For each $j$ and $s\in \left[\frac{(j - 1/2)L}{k}, \frac{(j + 1/2)L}{k}\right]$, we can write
\[ \gamma_0(s) = \exp_{\gamma_0\left(\frac{jL}{k}\right)}(\xi_j(s))\]
and
\[  \tilde{\gamma}_0(s) = \exp_{\gamma_0\left(\frac{jL}{k}\right)}(\tilde{\xi}_j(s)).\]
for some unique functions $\xi_j, \tilde{\xi}_j: \left[\frac{(j - 1/2)L}{k}, \frac{(j + 1/2)L}{k}\right] \to  T_{\gamma_0\left(\frac{jL}{k}\right)}M$.

\bigskip

\begin{lemma}{\label{closeness M}}
	For each $j$ and $s\in \left[\frac{(j - 1/2)L}{k}, \frac{(j + 1/2)L}{k}\right]$, with the notations above we have
\begin{itemize}
	\item[(i)]	$|\tilde{\xi}_j(s) - \xi_j(s)|\leq C\varepsilon$;
	\item[(ii)] $|\tilde{\xi}_j'(s) - \xi_j'(s)|\leq C\varepsilon$;
	\item[(iii)] $\left|\frac{d^m}{ds^m} \tilde{\xi}_j(s)\right|\leq C(m) \varepsilon$ for $m\geq 2$.
	\end{itemize}
\end{lemma}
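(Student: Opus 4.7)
The plan is to pull everything back to the fixed tangent space $T_{\gamma_0(jL/k)}M$ via the inverse exponential map, where the problem becomes formally Euclidean. Indeed, applying $\exp_{\gamma_0(jL/k)}^{-1}$ to the defining formula (\ref{smoothing M}) shows that $\tilde{\xi}_j(s)$ is exactly the same convex-combination-type expression as the Euclidean $\tilde{\gamma}_0(s)-\gamma_0(jL/k)$ from (\ref{smoothing}), only with the differences $\gamma_0((j\pm1)L/k)-\gamma_0(jL/k)$ replaced by the tangent vectors $\xi_j((j\pm1)L/k)$. Consequently, the argument in Lemma \ref{closeness} should carry over almost verbatim, with Lemma \ref{tangent vector estimate M} taking the place of Lemma \ref{tangent vector estimate}.

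First I would invoke Lemma \ref{tangent vector estimate M}, applied at the base point $s_0=jL/k$ (noting that $\xi_j(jL/k)=0$ and $\xi_j'(jL/k)=\gamma_0'(jL/k)$ by construction), to obtain the Riemannian analogue of (\ref{closeness estimate}):
\[\left|\xi_j\!\left(\tfrac{(j+1)L}{k}\right)-\tfrac{L}{k}\,\gamma_0'\!\left(\tfrac{jL}{k}\right)\right|\leq C\varepsilon,\qquad \left|\xi_j\!\left(\tfrac{(j-1)L}{k}\right)+\tfrac{L}{k}\,\gamma_0'\!\left(\tfrac{jL}{k}\right)\right|\leq C\varepsilon.\]
With these in hand, assertion (i) follows by the exact same algebraic manipulation as in the Euclidean case, bounding $|\tilde{\xi}_j(s)-(s-jL/k)\gamma_0'(jL/k)|$ by $C\varepsilon$, and then combining with Lemma \ref{tangent vector estimate M}(i) applied to $\xi_j$.

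For (ii), I would differentiate the explicit formula for $\tilde{\xi}_j$ in $s$ — an ordinary vector-space derivative, since $\tilde{\xi}_j$ takes values in the fixed vector space $T_{\gamma_0(jL/k)}M$ — and repeat the Euclidean calculation to obtain $|\tilde{\xi}_j'(s)-\gamma_0'(jL/k)|\leq C\varepsilon$, which combines with Lemma \ref{tangent vector estimate M}(ii) to yield (ii). For (iii), the same iterative differentiation produces, for $m\geq 2$,
\[\tilde{\xi}_j^{(m)}(s)=P_m\!\left(\tfrac{ks}{L}-j\right)\cdot\left\{\xi_j\!\left(\tfrac{(j+1)L}{k}\right)-2\xi_j\!\left(\tfrac{jL}{k}\right)+\xi_j\!\left(\tfrac{(j-1)L}{k}\right)\right\},\]
where $P_m$ is a universal polynomial expression in $\chi$ and its first $m$ derivatives. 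Since $\xi_j(jL/k)=0$, the bracket is a second difference which the cancellation in the two bounds above controls by $C\varepsilon$.

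I do not anticipate a real obstacle here: all the nonlinearity of the Riemannian structure has already been absorbed into $\xi_j$ and handled once and for all by Lemma \ref{tangent vector estimate M}, so the remaining computations are identical to the Euclidean ones in Lemma \ref{closeness}. The only minor care needed is to observe that the differentiation in $s$ is ordinary vector-space differentiation (no covariant derivatives enter), which is automatic because $\tilde{\xi}_j$ is a curve in a single fixed tangent space.
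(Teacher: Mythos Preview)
Your proposal is correct and follows essentially the same approach as the paper: pull back to the fixed tangent space $T_{\gamma_0(jL/k)}M$, invoke Lemma~\ref{tangent vector estimate M} to obtain the analogue of (\ref{closeness estimate}), and then repeat the algebraic computations of Lemma~\ref{closeness} verbatim. The paper's proof is slightly terser but makes exactly the same moves, including the intermediate bounds $|\tilde{\xi}_j(s)-(s-jL/k)\gamma_0'(jL/k)|\leq C\varepsilon$ and $|\tilde{\xi}_j'(s)-\gamma_0'(jL/k)|\leq C\varepsilon$ that you identify.
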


\begin{proof}
We first fix $j$. Then the assumption implies that there exists a unique path 
\[\xi_j: \left[\frac{(j - 1)L}{k}, \frac{(j + 1)L}{k}\right]\to T_{\gamma_0\left(\frac{jL}{k}\right)}M\]
such that
\[ \gamma_0(s) = \exp_{\gamma_0\left(\frac{jL}{k}\right)}(\xi_j(s))\]
for $s\in \left[\frac{(j - 1)L}{k}, \frac{(j + 1)L}{k}\right]$. 	 By Lemma \ref{tangent vector estimate M} the path $\xi_j(\cdot)$ is $C^1$-close to a line segment, up to errors of order $O(\varepsilon)$. More precisely, we have 	
\begin{align*}
	\begin{cases}
			&\left|\xi_j'(s) -\gamma_0'\left(\frac{jL}{k}\right)\right|\leq C\varepsilon,\quad\text{and}\\
			&\left| \xi_j(s) - \left(s - \frac{jL}{k}\right)\gamma_0'\left(\frac{jL}{k}\right) \right| \leq C\varepsilon	
	\end{cases}
\end{align*}
for $s\in \left[\frac{(j-1) L}{k}, \frac{(j + 1)L}{k}\right]$. In particular, this implies that

\begin{equation}{\label{closeness estimate M}}
\begin{cases}
&\left| \xi_j\left(\frac{(j+1)L}{k}\right)  -  \frac{L}{k}\gamma_0'\left(\frac{jL}{k}\right) \right| \leq C\varepsilon,\quad\text{and}\\
&\left| \xi_j\left(\frac{(j-1)L}{k}\right)  +  \frac{L}{k}\gamma_0'\left(\frac{jL}{k}\right) \right| \leq C\varepsilon
\end{cases}
\end{equation}\\
for  $s\in \left[\frac{(j-1)L}{k}, \frac{(j+1)L}{k} \right]$.  Next, from the definition of $\tilde{\gamma}_0$, we have
\begin{align*}
	\tilde{\xi}_j(s) = \left(\frac{ks}{L} - j\right)\chi\left(\frac{ks}{L} - j\right) \xi_j\left(\frac{(j+1)L}{k}\right) + \left( j - \frac{ks}{L} \right)\left( 1 -\chi\left(\frac{ks}{L} - j\right)\right) \xi_j\left(\frac{(j-1)L}{k}\right)
\end{align*}
for  $s\in \left[\frac{(j-1/2)L}{k}, \frac{(j+1/2)L}{k} \right]$. 
Subsequently, with the estimates (\ref{closeness estimate M}) we may argue as in Lemma \ref{closeness} to obtain \\

\begin{itemize}
	\item[(i)]	$\left|\tilde{\xi}_j(s) - \left(s - \frac{jL}{k}\right)\gamma_0'\left(\frac{jL}{k}\right)\right|\leq C\varepsilon$;
	\item[(ii)] $\left|\tilde{\xi}_j'(s) - \gamma_0'\left(\frac{jL}{k}\right)\right|\leq C\varepsilon$;
	\item[(iii)] $\left|\frac{d^m}{ds^m} \tilde{\xi}_j(s)\right|\leq C(m) \varepsilon$ for $m\geq 2$
\end{itemize}
for  $s\in \left[\frac{(j-1/2)L}{k}, \frac{(j+1/2)L}{k} \right]$. This implies the lemma.
\end{proof}

\bigskip

Once we have Lemma \ref{closeness M}, we can argue as in the later parts of Section 2 to construct a desired one-parameter family of disks. For each $s$, we similarly define a disk by 
\[ D_s := \{\exp_{\tilde{\gamma}_0(s)}(v) :\ \langle \tilde{\gamma}_0'(s), v\rangle = 0,\ |v|< 1\}\]
along $\tilde{\gamma}_0$. Similar to Section 2, such a one-parameter family of disks will form a smooth foliation when the parameter is restricted on intervals of length less than 20 as follows: Consider an arbitrary point on $\tilde{\gamma}_0$, without loss of generality we may assume this point to be $\tilde{\gamma}_0(0)$. Let $\{e_1, e_2, e_3\}$ be a local orthonormal frame around $\tilde{\gamma}_0(0)$ such that $\tilde{\gamma}'(0) = e_3(\tilde{\gamma}_0(0))$. Then, locally around $\tilde{\gamma}_0$, we consider a map $\Psi:\mathbb{R}^3\to M$ which is defined by
 \[\Psi (s, t_1, t_2) := \exp_{\tilde{\gamma}_0(s)}\left\{ \sum_{i=1}^2 t_i\cdot \left(|\tilde{\gamma}_0'(s)|^2 e_i(s) - \langle\tilde{\gamma}_0'(s), e_i(s)\rangle\tilde{\gamma}_0'(s)\right)\right\}.\]
 Then in the region $|s|< 100,\ t_1^2 + t_2^2<20$, the map $(s, t_1, t_2)\mapsto \Psi(s, t_1, t_2)$ is $C^m$-close to the map $(s, t_1, t_2)\mapsto\exp_{\tilde{\gamma}_0(0)}(t_1e_1 + t_2e_2 + se_3)$ by Lemma \ref{closeness M}, , with error bounded by $O(\varepsilon)$. Therefore we can argue as in Proposition \ref{foliated disks} to construct a smooth foliation locally around $|s|<50$. Subsequently Corollary \ref{local foliation} still holds when the ambient space is replaced by a Riemannian manifold. Therefore, the argument in Section 3 can be straightforwardly carried over to the Riemannian case. This proves Theorem \ref{main thm M}.

 \bigskip

 \section{Proof of Corollary \ref{main cor}}{\label{appendix b}}
 
 Observe that if the assumption of Corollary \ref{main cor} holds for $r>0$, then it also holds for any $0<r'<r$. Thus we may assume that $r<1$ without loss of generality. Now, fix some $0<r<1$, we rescale the metric $g$ by a factor of $1000Kr^{-2}$, where $K$ is an upper bound for the sectional curvature. In the rescaled metric $\tilde{g} = 1000Kr^{-2}g$, we have (i) $\textup{Area}_{\tilde{g}}(\Sigma)\leq \varepsilon^2$; (ii) $\textup{Sect}_{\tilde{g}}\leq\frac{1}{1000}$ and hence $\inj(M, \tilde{g})\geq 1000$; and (iii) in the rescalied metric $\Gamma_0$ has length greater than 1 and satisfies $\int_I|k_{\tilde{g}}|ds\leq\varepsilon$ for any sub-segment $I$ of length $1$.
 
 Moreover, the property that $\int_I|k_{\tilde{g}}|ds\leq\varepsilon$ for any sub-segment $I$ of length $1$ implies that $|P_{y,x}(T(y)) - T(x)| \leq\varepsilon$ for all points $x, y\in I$ and any sub-segment $I$ of length $1$. We have recovered the assumptions of Theorem \ref{main thm M}, and therefore Corollary \ref{main cor} is implied by Theorem \ref{main thm M}.

\bibliographystyle{amsplain}
\bibliography{citation}

\providecommand{\bysame}{\leavevmode\hbox to3em{\hrulefill}\thinspace}
\providecommand{\MR}{\relax\ifhmode\unskip\space\fi MR }
\providecommand{\MRhref}[2]{%
  \href{http://www.ams.org/mathscinet-getitem?mr=#1}{#2}
}
\providecommand{\href}[2]{#2}
\begin{thebibliography}{1}

\bibitem{CZ}
Huai-Dong Cao and Xi-Ping Zhu, \emph{{A Complete Proof of the Poincar{\'e} and
  Geometrization Conjectures - application of the Hamilton-Perelman theory of
  the Ricci flow}}, Asian Journal of Mathematics \textbf{10} (2006), no.~2, 165
  -- 492.

\bibitem{KL}
Bruce Kleiner and John Lott, \emph{{Notes on Perelman's papers}}, Geometry \&
  Topology \textbf{12} (2008), no.~5, 2587 -- 2855.

\bibitem{MT}
John~W. {Morgan} and Gang {Tian}, \emph{{Ricci Flow and the Poincare
  Conjecture}}, arXiv Mathematics e-prints (2006), math/0607607.

\bibitem{PL1}
Grisha {Perelman}, \emph{{The entropy formula for the Ricci flow and its
  geometric applications}}, arXiv Mathematics e-prints (2002), math/0211159.

\bibitem{PL3}
\bysame, \emph{{Finite extinction time for the solutions to the Ricci flow on
  certain three-manifolds}}, arXiv Mathematics e-prints (2003), math/0307245.

\bibitem{PL2}
\bysame, \emph{{Ricci flow with surgery on three-manifolds}}, arXiv Mathematics
  e-prints (2003), math/0303109.

\end{thebibliography}

\end{document}